\newtheorem{theorem}{Theorem}
\newtheorem{lemma}{Lemma}
\theoremstyle{remark}
\newtheorem{remark}{Remark}
\let\epsilon=\varepsilon
\let\phi=\varphi
 \let\Th=\Theta
\newcommand{\E}{\field{E}}
\newcommand{\field}[1]{\mathbb{#1}}
\newcommand{\R}{\field{R}}
\newcommand{\e}{{\mathcal E}}
\newcommand{\fracn}{\mbox{$\frac{1}{n}$}}
\def\der^#1_#2{\frac{\partial^{#1}}{\partial {#2}^{#1}}}
\def\1{\mbox{1\hspace{-.25em}I}}
\def\t{\theta}
\def\d{\delta}
\def\Th{\Theta}
\def\e{\varepsilon}
\def\b{\beta}
\def\bR{\mathbb{R}}
\begin{document}

\begin{frontmatter}
\title{An $\{\ell_1,\ell_2,\ell_\infty\}$-Regularization Approach to High-Dimensional Errors-in-variables Models}
\runtitle{$\{\ell_1,\ell_2,\ell_\infty\}$-Regularization to EIV Models}

\begin{aug}
\author{\fnms{Alexandre} \snm{Belloni}\ead[label=e1]{abn5@duke.edu}}

\address{The Fuqua School of Business\\
Duke University\\
\printead{e1}}

\author{\fnms{Mathieu} \snm{Rosenbaum}
\ead[label=e2]{mathieu.rosenbaum@upmc.fr}
}

\address{ Laboratoire de Probabilit\'es et Mod\`eles Al\'eatoires,\\
Universit\'e Pierre et Marie Curie (Paris 6), and \\
Centre de Recherche en Economie et Statistique,\\
ENSAE-Paris Tech\\
\printead{e2}}

\author{\fnms{Alexandre B.} \snm{Tsybakov}
\ead[label=e3]{alexandre.tsybakov@ensae.fr}
}

\address{ Centre de Recherche en Economie et Statistique,\\
ENSAE-Paris Tech\\
\printead{e3}}

\runauthor{Belloni, Rosenbaum and Tsybakov}

\affiliation{Some University and Another University}

\end{aug}

\begin{abstract}
Several new estimation methods have been recently proposed for the linear regression model with observation error in the design. Different assumptions on the data generating process have motivated different estimators and analysis. In particular, the literature considered (1) observation errors in the design uniformly bounded by some $\bar \delta$, and (2) zero mean independent observation errors. Under the first assumption, the rates of convergence of the proposed estimators depend explicitly on $\bar \delta$, while the second assumption has been applied when an estimator for the second moment of the observational error is available. This work proposes and studies two new estimators which, compared to other procedures for regression models with errors in the design, exploit an additional $\ell_\infty$-norm regularization. The first estimator is applicable when both (1) and (2) hold but does not require an estimator for the second moment of the observational error. The second estimator is applicable under (2) and requires an estimator for the second moment of the observation error. Importantly, we impose no assumption on the accuracy of this pilot estimator, in contrast to the previously known procedures.  As the recent proposals, we allow the number of covariates to be much larger than the sample size. We establish the rates of convergence of the estimators and compare them with the bounds obtained for related estimators in the literature. These comparisons show interesting insights on the interplay of the assumptions and the achievable rates of convergence.
\end{abstract}





\end{frontmatter}

\section{Introduction}
\noindent Several new estimation methods have been recently proposed for the linear regression model with observation error in the design. Such problems arise in a variety of applications, see \cite{RT1,LW,SFT1,SFT2}. In this work we consider the following regression model with observation error in the design:
\begin{eqnarray*}
y&=& X\theta^* + \xi,\\
Z&=&X+W.
\end{eqnarray*}
Here the random vector $y\in\mathbb{R}^n$ and the random $n\times p$ matrix
$Z$ are observed, the $n\times p$ matrix $X$ is unknown, $W$ is an
$n\times p$ random noise matrix, and $\xi\in \mathbb{R}^n$ is a random noise
vector. The vector of unknown parameters of interest is $\theta^*$ which is assumed to belong to a given convex subset $\Theta$ of $\mathbb{R}^p$ characterizing some prior
knowledge about $\theta^*$ (potentially $\Theta = \mathbb{R}^p$). Similarly to the recent literature on this topic, we consider the setting where the dimension $p$ can be much larger than the sample size $n$ and the vector $\theta^*$ is $s$-sparse, which means that it has not more than $s$ non-zero components.
\\

\noindent The need for new estimators under errors in the design arises from the fact that standard estimators (e.g. Lasso and Dantzig selector) might become unstable, see \cite{RT1}. To deal with this framework, various assumptions have been considered, leading to different estimators.
\\

\noindent A classical assumption in the literature is a uniform boundedness condition on the errors in the design, namely,
\begin{equation}\label{Wbound}
|W|_\infty \leq \bar \delta \ \ \mbox{almost surely},
\end{equation} where $|\cdot|_{q}$ denotes the $\ell_q$-norm for $1\le q\leq\infty$.
\noindent Note that this assumption allows for various dependences between the errors in the design. In this setting, the  Matrix Uncertainty selector (MU selector),
which is robust to the presence of errors in the design, is proposed in \cite{RT1}. The MU selector $\hat \theta^{MU}$ is defined as a
solution of the minimization problem
\begin{equation}\label{Def:MU}
\min\{ |\theta|_1: \,\, \theta\in\Theta, \,
\big|\fracn Z^T(y-Z\theta)\big|_\infty\leq \mu |\theta|_1 +\tau\},
\end{equation}
where the parameters $\mu$ and $\tau$
depend on the level of the noises of $W$ and $\xi$ respectively. Under appropriate choices of these parameters and suitable assumptions on $X$, it was shown in \cite{RT1} that with probability close to 1,
\begin{equation}\label{rtMU}
|\hat\theta^{MU}-\t^*|_q\le C s^{1/q}\{\bar\delta+\bar\delta^2\}|\t^*|_1+ C s^{1/q}\sqrt{\frac{\log p}{n}}, \quad  1\le q\le \infty.
\end{equation}
Here and in what follows we denote by the same symbol $C$ different positive constants that do not depend on 
$\theta^*$, $s$, $n$, $p$, $\bar\delta$.
The result (\ref{rtMU}) implies consistency as the sample size $n$ tends to infinity provided that the error in the design goes to zero sufficiently fast to offset $s^{1/q}|\t^*|_1$, and the number of variables $p$ and the sparsity $s$ of $\t^*$ do not grow too fast relative to the sample size $n$.
\\

\noindent An alternative assumption considered in the literature is that the entries of the random matrix $W$ are independent with zero mean, the values
\begin{equation*} \sigma_j^2=\frac{1}{n}\sum_{i=1}^n \mathbb{E}(W_{ij}^2), \ j=1,\ldots,p,\end{equation*}
are finite, and data-driven estimators $\hat{\sigma}_j^2$ of $\sigma_j^2$ are available converging with an appropriate rate. This assumption motivated the idea to compensate
the bias of using the observable $Z^TZ$ instead of the unobservable $X^TX$ in \eqref{Def:MU} thanks to the estimates of $\sigma_j^2$. This compensated MU selector, introduced in \cite{RT2} and denoted as $\hat\theta^{cMU}$, is defined as a solution of the
minimization problem
\begin{equation*}
\min\{|\theta|_1: \,\, \theta\in\Theta, \,
\big|\fracn Z^T(y-Z\theta)+\widehat{D}\theta\big|_\infty\leq\mu|\theta|_1
+\tau\},
\end{equation*}
where $\widehat{D}$ is the diagonal matrix with entries $\hat \sigma_j^2$ and
$\mu> 0$ and $\tau > 0$ are constants chosen according to the level of the noises and the accuracy of the $\hat\sigma_j^2$.\\

\noindent Rates of convergence of the compensated MU selector were established in \cite{RT2}. Importantly, the compensated MU selector can be consistent as the sample size $n$ increases even if the error in the design does not vanish. This is in contrast to the case of the MU selector, where the bounds are small
only if the bound on the design error $\bar \delta$ is small. In particular, under regularity conditions, when $\t^*$ is $s$-sparse, it is shown in \cite{RT2} that with probability close to 1
\begin{equation}\label{rt13}
|\hat\theta^{cMU}-\t^*|_q\le C s^{1/q}\sqrt{\frac{\log p}{n}}(|\t^*|_1+1), \quad  1\le q\le \infty.
\end{equation}

\noindent Under the same alternative assumption, a conic programming based estimator $\hat\theta^{C}$ has been recently proposed and analyzed in \cite{BRT2014}. The estimator $\hat\theta^{C}$ is defined as the first component of any solution of the optimization problem
\begin{equation}\label{conic}
\underset{(\theta,t)\in \bR^p\times \bR_+}{\min}\{|\theta|_1+ \lambda t : \,\, \theta\in\Theta, \,
\big|\fracn Z^T(y-Z\theta)+\widehat{D}\theta\big|_\infty\leq\mu t
+\tau, \ |\theta|_2 \le t\},
\end{equation}
where $\lambda$, $\mu$ and $\tau$ are some positive tuning constants. Akin to $\hat\theta^{cMU}$, this estimator compensates for the bias by using the estimators $\hat \sigma_j^2$ of $\sigma_j^2$. However it exploits a combination of $\ell_1$ and $\ell_2$-norm regularization to be more adaptive.  It was shown to attain a bound as in (\ref{rt13}) and to be computationally feasible since it is cast as a tractable convex optimization problem (a second order cone programming problem). Moreover, under mild additional conditions, with probability close to 1, the estimator (\ref{conic}) achieves improved bounds of the form
\begin{equation}\label{6}
|\hat \theta^C-\t^*|_q\le C s^{1/{q}}\sqrt{\frac{\log p}{n}}(|\t^*|_{2}+1), \quad  1\le q\le \infty,
\end{equation}
provided that $\hat D$ converges to $D$ in sup-norm with the rate
$\sqrt{(\log p)/n}$. 
It is shown in \cite{BRT2014} that the rate of convergence in \eqref{6} is minimax optimal in the considered model.\\

\noindent There have been other approaches to the errors-in-variables model, usually exploiting some knowledge about the vector $\t^*$, see \cite{LW,SFT1,CChen,CChen1}. Assuming $|\t^*|_1$ is known, \cite{LW} proposed an estimator $\hat \t'$ defined as the solution of a non-convex program which can be well approximated by an iterative relaxation procedure. In the case where the entries of the regression matrix $X$ are zero-mean subgaussian and $\t^*$ is $s$-sparse, under appropriate assumptions, it is shown in \cite{LW} that for the error in $\ell_2$-norm ($q=2$),
\begin{equation}\label{lw12}
|\hat \t'-\t^*|_2\le C(\theta^*) s^{1/{2}}\sqrt{\frac{\log p}{n}}(|\t^*|_{2}+1),
\end{equation}
with probability close to 1. Here, the value $C(\theta^*)$ depends on $\theta^*$, so that there is no guarantee that the estimator attains the optimal bound as in \eqref{6}. Assuming that the sparsity $s$ of $\t^*$ is known and the non-zero components of $\t^*$ are separated from zero in the way that
$$|\t^*_j| \ge C \sqrt{\frac{\log p}{n}}(|\t^*|_2+1),$$
an orthogonal matching pursuit algorithm to estimate $\theta^*$ is introduced in \cite{CChen,CChen1}. Focusing as in \cite{LW} on the particular case where the entries of the regression matrix $X$ are zero-mean subgaussian, it is shown in \cite{CChen,CChen1} that this last estimator satisfies a bound analogous to (\ref{6}), as well as a consistent support recovery result. \\

\noindent The main purpose of this work is to show that an additional regularization term based on the $\ell_\infty$-norm leads to improved rates of convergence in several situations.  We propose two new estimators for $\t^*$. The first proposal is applicable under a new combination of the assumptions mentioned above. Namely, we assume that the components of the errors in the design are uniformly bounded by $\bar\delta$ as in (\ref{Wbound}), and that the rows of $W$ are independent and with zero mean. However, we will neither assume that a data-driven estimator $\hat D$ is available, nor that specific features of $\t^*$ are known (e.g. $s$ or $|\t^*|_1$). The estimator is defined as a solution of a regularized optimization problem which uses simultaneously $\ell_1$,  $\ell_2$, and $\ell_\infty$ regularization functions.  It can be cast as a convex optimization problem and the solution can be easily computed. We study its rates of convergence in various norms in Section \ref{first}. One of the conclusions is that for $\bar\delta\gg \sqrt{(\log p)/n}$ the new estimator has improved rates of convergence compared to the MU selector. Furthermore, note that the conic estimator $\hat \theta^C$ studied in \cite{BRT2014} can be also applied. Indeed, our setting can be embedded into that of \cite{BRT2014} with $\widehat D$ being the identically zero ${p\times p}$ matrix, which means that we have an estimator of each $\sigma_j^2$ with an error bounded by $\bar\delta^2$. Comparing the bounds yields that the conic estimator $\hat \theta^C$ achieves the same rate as our new estimator if $\bar\delta$ is smaller than or of the order $\big((\log p)/n\big)^{1/4}$. However, there is no bound for $\hat \theta^C$ available when $\bar\delta\gg \big((\log p)/n\big)^{1/4}$.\\

\noindent The second estimator we propose applies to the same setting as in \cite{BRT2014}. The idea of taking advantage of an additional $\ell_\infty$-norm regularization can be used to improve the conic estimator $\hat \theta^C$ of \cite{BRT2014} whenever the rate of convergence of the estimator $\widehat D$  for $\sigma_j^2$, $j=1,\ldots,p$, is slower than $\sqrt{(\log p)/n}$. This motivates us to propose and analyze a modification of the conic estimator. We derive new rates of convergence that can lead to improvements. However, we acknowledge that in the case considered in \cite{BRT2014}, where the rate of convergence of $\widehat D$ is $\sqrt{(\log p)/n}$, there is no gain in the rates of convergence when using the additional $\ell_\infty$-norm regularization.\\

\noindent The paper is organized as follows. Section \ref{sec:assump} contains the notation, main  assumptions and some preliminary lemmas needed to determine threshold constants in the algorithms. The definition and properties of our first estimator are given in Section \ref{first} whereas those of our second procedure can be found in Section \ref{second}. Section 5 contains  simulation results. Some auxiliary lemmas are relegated to an appendix.


\section{Notation, assumptions, and preliminary lemmas}\label{sec:assump}

In this section, we introduce the assumptions which will be required to derive the rates of convergence of the proposed estimators. One set of conditions pertains to the design matrix and the second to the errors in the model. We also state preliminary lemmas related to the stochastic error terms. We start by introducing some notation.

\subsection{Notation} Let $J\subset \{1,\ldots,p\}$ be a set of integers. We denote by $|J|$ the cardinality of $J$.  For a vector $\t=(\t_1,\dots,\t_p)$ in $\bR^p$, we denote by $\t_J$ the vector in $\bR^p$ whose $j$th component satisfies $(\t_J)_j = \t_j$ if $j\in J$, and $(\t_J)_j = 0$ otherwise. For $\gamma>0$, the random variable $\eta$ is said to be {\it sub-gaussian with variance parameter} $\gamma^2$ (or shortly {\it $\gamma$-sub-gaussian})
if, for all $t\in\mathbb{R}$,
\begin{equation*}
\E[\text{exp}(t\eta)]\leq
\text{exp}(\gamma^2t^2/2).
\end{equation*}
A random vector $\zeta\in \R^p$ is said to be
{\it sub-gaussian with variance parameter} $\gamma^2$ 
if the inner products $(\zeta, v)$ are  $\gamma$-sub-gaussian for any $v\in \R^p$ with $|v|_2=1$.

\subsection{Design matrix}\label{sec:def}

The performance of the estimators that we consider below is influenced by the properties of the Gram
matrix
$$
\Psi = \frac1{n}X^TX.
$$
We will assume that:
{\it
\begin{itemize}
\item[(A1)] The matrix $X$ is deterministic.
\end{itemize}
}
\noindent In order to characterize the behavior of the design matrix, we set \begin{equation*}
m_2=\max_{j=1,\dots,p} \frac1{n}\sum_{i=1}^n X_{ij}^2,
\end{equation*}
where the $X_{ij}$ are the elements of matrix $X$ and we consider the sensitivity characteristics related to the Gram matrix $\Psi$. 
For $u>0$, define the cone
$$
C_{J}(u)=\big\{\Delta\in\bR^p:\ |\Delta_{J^c}|_1\le
u|\Delta_{J}|_1 \big\},
$$
where $J$ is a subset of $\{1,\hdots,p\}$. For $q\in[1,\infty]$ and
an integer $s\in[1,p]$, the {\em $\ell_q$-sensitivity} (cf. \cite{GT}) is defined as
follows:
$$
\kappa_{q}(s,u)=\min_{J: \ |J|\le s} \Big(\min_{\Delta\in
C_J(u):\ |\Delta|_q=1} \left|\Psi\Delta \right|_{\infty}\Big).
$$
Like in \cite{GT}, we use here the sensitivities to derive the rates of convergence of estimators under sparsity. Importantly, as shown in \cite{GT}, the approach based on sensitivities is more general
than that based on the restricted eigenvalue or the coherence conditions, see also \cite{RT2, GT2, BRT2014}. In particular, under those conditions, we have $\kappa_{q}(s,u) \ge c \,s^{-1/q}$ for some constant $c>0$, which implies the usual optimal bounds for the errors.

\subsection{Disturbances}

Next we turn to the error $W$ in the design and the error $\xi$ in the regression equation.  We will make the following assumptions. \\
{\it
\begin{itemize}
\item[(A2)] The elements of the random vector $\xi$ are independent zero-mean sub-gaussian random variables
with variance parameter~$\sigma^2$.
\item[(A3)] The rows $w_i$, $i=1,\dots,n$, of the noise matrix $W$ are independent zero-mean sub-gaussian random vectors with variance parameter~$\sigma^2_*$. Furthermore, $W$ is independent of $\xi$.
\end{itemize}
}

\subsection{Bounds on the stochastic error terms}

\noindent We now state some useful lemmas from \cite{BRT2014} and \cite{RT2} that provide bounds to various stochastic error terms that play a role in our analysis. We state them here because they introduce the thresholds $\delta_i, \delta_i'$ that will be used in the definition of the estimators. In what follows, $D$ is the diagonal matrix with diagonal elements
$\sigma_j^2$, $j=1,\dots,p$, and for a square matrix $A$, we denote
by $\text{Diag}\{A\}$ the matrix with the same dimensions as $A$,
the same diagonal elements, and all off-diagonal elements
equal to zero.

\begin{lemma}\label{lem2} Let $0<\e<1$ and assume (A1)-(A3). Then, with probability at least $1-\varepsilon$ (for each event),
\begin{eqnarray*}
&&\big|\fracn X^TW\big|_\infty \le \delta_1(\varepsilon),\quad \big| \fracn X^T\xi\big|_\infty \le \delta_2(\varepsilon),\quad \big| \fracn W^T\xi\big|_\infty \le \delta_3(\varepsilon),\\
&& \big|\fracn(W^TW-\text{\rm Diag}\{W^TW\})\big|_\infty \le \delta_4(\varepsilon),\quad\big|\fracn \text{\rm Diag}\{W^TW\}-D\big|_\infty \le \delta_5(\varepsilon),
\end{eqnarray*}
where
\begin{eqnarray*}
&&
\delta_1(\varepsilon)=\sigma_*
\sqrt{\frac{2m_2\log(2p^2/\varepsilon)}{n}}, \quad \delta_2(\varepsilon)=\sigma
\sqrt{\frac{2m_2\log(2p/\varepsilon)}{n}},\\
&& \delta_3(\varepsilon)=\delta_5(\varepsilon)=\bar \delta
(\varepsilon,2p), \quad \delta_4(\varepsilon)=\bar \delta
(\varepsilon,p(p-1)),
\end{eqnarray*}
and for
an integer $N$,
$$
\bar \delta (\varepsilon,N) = \max\left(\gamma_0
\sqrt{\frac{2\log(N/\varepsilon)}{n}},\
\frac{2\log(N/\varepsilon)}{t_0n}\right),
$$
where $\gamma_0, t_0$ are positive constants depending only on $\sigma, \sigma_*$.
\end{lemma}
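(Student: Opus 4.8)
The plan is to split the five inequalities into two groups and to use the same two tools throughout: for the terms that are \emph{linear} in the noise, a sub-gaussian tail bound combined with a union bound over the relevant entries; for the terms that are \emph{quadratic} in the noise, a Bernstein-type inequality for sums of independent sub-exponential variables combined with a union bound. The $\max$ in the definition of $\bar\delta(\varepsilon,N)$ is exactly the signature of the two regimes of Bernstein's inequality. For the linear terms, by (A1) the matrix $X$ is deterministic, so the $(j,k)$ entry of $\frac1n X^TW$ is $\frac1n\sum_{i=1}^n X_{ij}W_{ik}$, a fixed linear combination of the variables $W_{ik}$ (the $k$-th coordinates of the rows $w_i$), which by (A3) are independent across $i$ and $\sigma_*$-sub-gaussian. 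Hence this entry is sub-gaussian with variance parameter $\tfrac{\sigma_*^2}{n^2}\sum_i X_{ij}^2\le \sigma_*^2 m_2/n$, so its tail is at most $2\exp(-nt^2/(2\sigma_*^2 m_2))$, and a union bound over the $p^2$ entries with $t=\delta_1(\varepsilon)$ gives the first claim; the same computation under (A2), with a union bound over $p$ coordinates, gives the bound for $\frac1n X^T\xi$ with $t=\delta_2(\varepsilon)$.

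For the three quadratic terms, each entry is an average over $i$ of a product of two sub-gaussian variables, hence of a sub-exponential variable, and the summands are independent across $i$. For $\frac1n W^T\xi$ the $k$-th coordinate is $\frac1n\sum_i W_{ik}\xi_i$, and since $W$ is independent of $\xi$ by (A3) each $W_{ik}\xi_i$ is centered and sub-exponential with parameters depending only on $\sigma,\sigma_*$; Bernstein's inequality then gives a tail of the form $2\exp\!\big(-c\,n\min(t^2/\gamma_0^2,\,t/\gamma_0')\big)$, which inverted and combined with a union bound over the $2p$ coordinates yields $\delta_3(\varepsilon)=\bar\delta(\varepsilon,2p)$. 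For $\frac1n\,\mathrm{Diag}\{W^TW\}-D$ the $j$-th diagonal entry is $\frac1n\sum_i (W_{ij}^2-\mathbb{E}[W_{ij}^2])$, a centered average of independent sub-exponential (squared sub-gaussian) variables, and the same argument over $2p$ coordinates gives $\delta_5(\varepsilon)=\bar\delta(\varepsilon,2p)$. For the off-diagonal part of $\frac1n W^TW$ I would use the polarization identity $W_{ij}W_{ik}=\tfrac14\{(W_{ij}+W_{ik})^2-(W_{ij}-W_{ik})^2\}$, noting that $W_{ij}\pm W_{ik}$ is an inner product of $w_i$ with a vector of $\ell_2$-norm $\sqrt2$ and hence $\sqrt2\,\sigma_*$-sub-gaussian; this reduces the $(j,k)$ entry with $j\ne k$ to the sub-exponential case already treated, and a union bound over the $p(p-1)$ off-diagonal entries gives $\delta_4(\varepsilon)=\bar\delta(\varepsilon,p(p-1))$.

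The sub-gaussian steps behind $\delta_1,\delta_2$ are routine; the delicate points, all concerning the quadratic terms, are two. First, the coordinates of a sub-gaussian random vector need not be independent, so one cannot invoke a bound on products of independent sub-gaussians directly; this is precisely why the polarization identity above is useful, as it reduces every quadratic quantity to squares of \emph{single} sub-gaussian coordinates, which are genuinely sub-exponential. Second, one must keep explicit track of the absolute constants in Bernstein's inequality, since these are exactly the constants $\gamma_0$ and $t_0$ entering $\bar\delta(\varepsilon,N)$ and therefore the thresholds used to define the estimators later on. Since both points are standard and the statement is quoted from \cite{BRT2014} and \cite{RT2}, the most economical route in the paper is to reference the corresponding lemmas there; the sketch above records their content.
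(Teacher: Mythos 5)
Your sketch is correct and is essentially the argument behind the lemma: the paper itself gives no proof here, deferring to \cite{RT2} (and \cite{BRT2014} for Lemma 2), and the proof there is exactly the combination you describe --- sub-gaussian tail bounds plus union bounds for the terms linear in the noise, and Bernstein-type bounds for the quadratic, sub-exponential terms, with the two regimes of Bernstein's inequality producing the $\max$ in $\bar\delta(\varepsilon,N)$ and the union-bound cardinalities matching $2p^2$, $2p$, $2p$ and $p(p-1)$. The one point worth making explicit is that bounding the off-diagonal entries of $\frac{1}{n}W^TW$ by $\delta_4(\varepsilon)$ requires $\mathbb{E}[W_{ij}W_{ik}]=0$ for $j\ne k$ (so that your polarization-plus-Bernstein step controls the entry itself and not merely its deviation from a possibly nonzero mean); this holds under the entrywise independence assumed in \cite{RT2} but is not literally implied by (A3) as stated, which only requires the rows to be independent sub-gaussian vectors.
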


\begin{lemma} \label{lem3a}
Let $0<\e<1$, $\theta^*\in \R^p$ and assume (A1)-(A3). Then, with probability at least $1-\varepsilon$,
\begin{align*}
&\big|\fracn X^TW\theta^*\big|_{\infty} \leq \delta_1'(\e)|\theta^*|_2,
\end{align*}
where $\delta_1'(\e)= \sigma_{*} \sqrt{\frac{2m_2\log(2p/\e)}{n}}$.
In addition, 
with probability at least $1-\varepsilon$,
\begin{align*}
&\big|\fracn (W^TW-{\rm Diag}\{W^TW\})\theta^*\big|_{\infty}\leq\delta_4'(\e)|\theta^*|_2,
\end{align*}
where 
$$
\delta_4'(\e) =\max\left(\gamma_2
\sqrt{\frac{2\log(2p/\varepsilon)}{n}},\
\frac{2\log(2p/\varepsilon)}{t_2n}\right),
$$
and $\gamma_2, t_2$ are positive constants depending only on $\sigma_*$.
\end{lemma}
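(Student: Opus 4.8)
The plan is to prove the two estimates separately. In each case I would fix a coordinate $j\in\{1,\dots,p\}$, obtain a tail bound for that coordinate, and conclude by a union bound over $j$, using throughout that by (A3) the rows $w_i$ of $W$ are independent and centered.

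For the first bound, the $j$-th coordinate of $\fracn X^TW\theta^*$ is $\fracn\sum_{i=1}^n X_{ij}\,(w_i,\theta^*)$. Since $(w_i,\theta^*)=|\theta^*|_2\,(w_i,\theta^*/|\theta^*|_2)$, assumption (A3) makes $(w_i,\theta^*)$ a $\sigma_*|\theta^*|_2$-sub-gaussian variable, so each $\fracn X_{ij}(w_i,\theta^*)$ is $(|X_{ij}|/n)\sigma_*|\theta^*|_2$-sub-gaussian, these being independent in $i$ with zero mean. Their sum is therefore sub-gaussian with variance parameter $\frac{\sigma_*^2|\theta^*|_2^2}{n^2}\sum_{i=1}^n X_{ij}^2\le \frac{\sigma_*^2|\theta^*|_2^2\,m_2}{n}$, and the standard sub-gaussian tail inequality gives
\begin{equation*}
\mathbb{P}\Big(\bigabs{\tfrac1n(X^TW\theta^*)_j}>t\Big)\le 2\exp\Big(-\frac{n\,t^2}{2\sigma_*^2|\theta^*|_2^2\,m_2}\Big).
\end{equation*}
A union bound over $j$ together with $t=\delta_1'(\e)|\theta^*|_2$ makes the right-hand side equal to $\e$, which is the claim.

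For the second bound, the $j$-th coordinate of $\fracn(W^TW-{\rm Diag}\{W^TW\})\theta^*$ is $\fracn\sum_{i=1}^n W_{ij}\,u_i$ with $u_i=\sum_{k\ne j}W_{ik}\theta^*_k=(w_i,\theta^*)-W_{ij}\theta^*_j$; the latter is a fixed linear functional of $w_i$ whose coefficient vector has Euclidean norm at most $|\theta^*|_2$, hence $u_i$ is $\sigma_*|\theta^*|_2$-sub-gaussian by (A3). The summands $W_{ij}u_i$ are independent in $i$; each is a product of the $\sigma_*$-sub-gaussian $W_{ij}$ and the $\sigma_*|\theta^*|_2$-sub-gaussian $u_i$, hence sub-exponential with $\psi_1$-norm at most $C\sigma_*^2|\theta^*|_2$ for an absolute constant $C$, and $\mathbb{E}[W_{ij}u_i]=0$ since distinct entries of $w_i$ are uncorrelated ($\mathbb{E}[W_{ij}W_{ik}]=0$ for $k\ne j$). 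Bernstein's inequality for sums of independent sub-exponential variables then yields, for an absolute constant $c>0$,
\begin{equation*}
\mathbb{P}\Big(\bigabs{\tfrac1n\bigpar{(W^TW-{\rm Diag}\{W^TW\})\theta^*}_j}>s\Big)\le 2\exp\Big(-c\,n\min\Big\{\tfrac{s^2}{C^2\sigma_*^4|\theta^*|_2^2},\ \tfrac{s}{C\sigma_*^2|\theta^*|_2}\Big\}\Big).
\end{equation*}
Setting $s=\delta_4'(\e)|\theta^*|_2$ and bounding below the two terms in the maximum defining $\delta_4'(\e)$ separately, one checks that choosing $\gamma_2$ large enough and $t_2$ small enough --- depending only on $\sigma_*$ and the Bernstein constants $c,C$ --- forces the exponent to be at least $\log(2p/\e)$ in both regimes; a union bound over $j$ then finishes.

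The sub-gaussian/sub-exponential bookkeeping is routine, and I expect the only delicate point to be in the second bound: the $\theta^*$-dependence must come out linear in $|\theta^*|_2$, which works because only the factor $u_i$ of $W_{ij}u_i$ carries $\theta^*$, so the product's sub-exponential scale is $O(\sigma_*^2|\theta^*|_2)$ and not $O(\sigma_*^2|\theta^*|_2^2)$; and the sub-exponential (rather than sub-gaussian) nature of the summands is exactly what produces the two regimes, and hence the maximum, in the definition of $\delta_4'(\e)$. These arguments run parallel to those behind Lemma \ref{lem2} as quoted from \cite{BRT2014,RT2}, the refinement being that the dependence on $\theta^*$ is tracked directly through $|\theta^*|_2$ rather than through cruder $|\theta^*|_1$-type bounds.
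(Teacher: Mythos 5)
Your argument is correct and takes the route the paper intends: the paper defers this proof to \cite{BRT2014}, but the form of the thresholds --- a pure sub-gaussian rate $\sigma_*\sqrt{2m_2\log(2p/\e)/n}$ for the first bound and a two-regime Bernstein-type rate for $\delta_4'$ --- is exactly what your coordinate-wise tail bounds (sub-gaussian Chernoff for $\fracn\sum_i X_{ij}(w_i,\theta^*)$, sub-exponential Bernstein for $\fracn\sum_i W_{ij}u_i$) followed by a union bound over the $p$ coordinates produce, including the correct linear dependence on $|\theta^*|_2$. The one point worth flagging is that your centering step $\E[W_{ij}W_{ik}]=0$ for $k\ne j$ is not literally contained in (A3) as stated, but it is implicitly required for the paper's own Lemma~\ref{lem2} (the $\delta_4$ bound on the off-diagonal of $\fracn W^TW$) to hold, so you are reading the assumptions the same way the authors do.
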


\noindent The proofs of Lemmas \ref{lem2} and \ref{lem3a} can be found in \cite{RT2} and \cite{BRT2014} respectively.


\section{$\{\ell_1,\ell_2,\ell_\infty\}$-MU selector}\label{first}

 In this section, we define and analyze our first estimator. It can be seen as a compromise between the MU selector (\ref{Def:MU}) and the conic estimator (\ref{conic}) achieved thanks to an additional $\ell_\infty$-norm regularization. In the setting that we consider now, the estimate $\hat D$ is not available but the rows of the design error matrix $W$ are independent with mean 0, and its entries are uniformly bounded. Formally, in this section, we make the following assumption.
  {\it
\begin{itemize}
 \item[(A4)] Almost surely, $|W|_\infty\leq \bar\delta$.
\end{itemize}
}

\noindent Thus, Assumptions (A1)-(A4) imply the assumptions in \cite{RT1}. However, they neither imply or are implied by the assumptions in \cite{RT2}. That is, it is an intermediary set of conditions relative to the original assumptions for the MU selector in \cite{RT1} and to those for the compensated MU selector in \cite{RT2}.  Importantly, we do not assume that there 
are some accurate estimators of the $\sigma_j^2$. \\

\noindent We consider the estimator $\hat\theta$ such that $(\hat \theta, \hat  t, \hat u) \in \mathbb{R}^{p}\times \R_+ \times \R_+$ is a solution of the following minimization problem
\begin{equation}\label{conicMUnew}
\min_{\theta,t,u}\{|\theta|_1+ \lambda t + \nu u : \,\, \theta\in\Theta, \,
\big|\fracn Z^T(y-Z\theta)\big|_\infty\leq\mu t
+\bar\delta^2 u + \tau, \ |\theta|_2 \le t, |\theta|_\infty \le u\},
\end{equation}
\noindent where $\lambda >0$ and $\nu > 0$ are tuning constants and the minimum is taken over $(\theta,  t,  u) \in \mathbb{R}^{p}\times \R_+ \times \R_+$. This estimator $\hat\theta$ will be further referred to as the $\{\ell_1,\ell_2,\ell_\infty\}$-MU selector. \\

\noindent The estimator above attempts to mimic the conic estimator (\ref{conic}) without an estimator $\widehat D$ for $\sigma_j^2$, $j=1,\ldots,p$. In order to make $\t^*$ feasible for (\ref{conicMUnew}), the contribution of the unknown term $\fracn{\rm Diag}(W^TW)\t^*$ needs to be bounded. This is precisely the role of the extra term $\bar\delta^2 u$ in the constraint since $|\theta|_\infty\leq u$ and $|\fracn{\rm Diag}(W^TW)|_\infty\leq \bar\delta^2$ almost surely. Note that the use of $u$ and $t$ instead of $|\theta|_\infty$ and $|\theta|_2$ in the constraint makes (\ref{conicMUnew}) a convex programming problem.\\

\noindent This new estimator exploits Assumptions (A2)-(A4) to achieve a rate of convergence that is intermediary relative to the rate of the MU selector and to that of the conic estimator. \\

\noindent Set $\mu=\delta'_1(\varepsilon)+\delta'_4(\varepsilon)$ and $\tau=\delta_2(\varepsilon)+\delta_3(\varepsilon)$. Note that $\mu$ and $\tau$  are of order $\sqrt{(\log p)/n}$. The next theorem summarizes the performance of the estimator defined by solving (\ref{conicMUnew}).

\begin{theorem}\label{th:MUnew} Let Assumptions (A1)-(A4) hold. Assume that the true parameter $\t^*$ is $s$-sparse and belongs to $\Th$. Let $0<\e<1$, $1\le q\le \infty$ and $0<\lambda, \nu<\infty$, and let $\hat\theta$ be the $\{\ell_1,\ell_2,\ell_\infty\}$-MU selector. If $\kappa_q(s,1+\lambda+\nu)\geq c s^{-1/q}$  for some constant $c>0$ then, with probability at least $1-7\e$,
\begin{equation}\label{eq:th:MUnew1} |\hat\theta-\theta^*|_q \leq Cs^{1/q} \sqrt{\frac{\log(c'p/\varepsilon)}{n}}(|\theta^*|_1+1) + Cs^{1/q}\bar\delta^2 |\theta^*|_1,
\end{equation}
for some constants $C>0$ and $c'>0$ (here we set $s^{1/\infty}=1$).\\

\noindent If in addition, $\bar\delta^2+ \sqrt{\log(p/\varepsilon)/n} \leq c_1\kappa_1(s,1+\lambda+\nu)$ for some small enough constant~$c_1$ then, with the same probability we have
\begin{equation}\label{eq:th:MUnew2}  |\hat\theta-\theta^*|_q \leq Cs^{1/q} \sqrt{\frac{\log(c'p/\varepsilon)}{n}}(|\theta^*|_2+1) + Cs^{1/q}\bar\delta^2 |\theta^*|_\infty
\end{equation}
for some constants $C>0$ and $c'>0$.\\

\noindent Under the same assumptions with $q=1$, the prediction error admits the following bound,
with the same probability:
\begin{eqnarray}\label{th:MUnew2:pred}
\fracn\big|X(\hat\t-\t^*)\big|_2^2&\le& C s \frac{\log (c'p/\e)}{n} (|\theta^*|_2 +1)^2+Cs\bar\delta^4 |\t^*|_\infty^2\,.
\end{eqnarray}
\end{theorem}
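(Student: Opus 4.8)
### Proof proposal for Theorem \ref{th:MUnew}

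The plan is to follow the standard feasibility-plus-sensitivity argument from \cite{GT,RT2,BRT2014}, adapted to the three-term regularization. First I would show that, on the intersection of the events in Lemmas \ref{lem2} and \ref{lem3a} (which has probability at least $1-7\e$, counting the five events of Lemma \ref{lem2} and the two of Lemma \ref{lem3a}), the triple $(\t^*, |\t^*|_2, |\t^*|_\infty)$ is feasible for \eqref{conicMUnew} with the prescribed $\mu,\tau$. Indeed, write $\fracn Z^T(y-Z\t^*) = \fracn X^T\xi + \fracn W^T\xi - \fracn X^TW\t^* - \fracn(W^TW - \mathrm{Diag}\{W^TW\})\t^* - \fracn\mathrm{Diag}\{W^TW\}\t^*$. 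The first two terms are bounded in $\ell_\infty$ by $\delta_2(\e)+\delta_3(\e)=\tau$; the next two are bounded by $(\delta_1'(\e)+\delta_4'(\e))|\t^*|_2 = \mu|\t^*|_2$ via Lemma \ref{lem3a}; and the last term is bounded coordinatewise by $\bar\delta^2|\t^*|_\infty$ using (A4). Hence the constraint holds with $t=|\t^*|_2$, $u=|\t^*|_\infty$, so $\t^*$ is feasible.

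Next, set $\hat\Delta = \hat\t - \t^*$ and exploit optimality: $|\hat\t|_1 + \lambda\hat t + \nu\hat u \le |\t^*|_1 + \lambda|\t^*|_2 + \nu|\t^*|_\infty$. Since $\hat t \ge |\hat\t|_2 \ge 0$ and $\hat u \ge |\hat\t|_\infty \ge 0$, and using $|\t^*|_2 \le |\t^*_J|_1$, $|\t^*|_\infty \le |\t^*_J|_1$ on the support $J$ of $\t^*$ (with $|J|\le s$), the usual cone argument gives $|\hat\Delta_{J^c}|_1 \le (1+\lambda+\nu)|\hat\Delta_J|_1$, i.e. $\hat\Delta \in C_J(1+\lambda+\nu)$. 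Then, as in \cite{GT}, I bound $|\Psi\hat\Delta|_\infty$ from above: writing $\fracn Z^TZ\hat\Delta = \fracn X^TX\hat\Delta + (\text{cross terms in }W)$, the feasibility of both $\hat\t$ and $\t^*$ in the constraint, combined with the Lemma \ref{lem2} bounds on $\fracn X^TW$, $\fracn(W^TW-\mathrm{Diag}\{W^TW\})$ and $\fracn\mathrm{Diag}\{W^TW\}-D$, yields $|\Psi\hat\Delta|_\infty \le \mu(\hat t + |\t^*|_2) + \bar\delta^2(\hat u + |\t^*|_\infty) + 2\tau + (\delta_1(\e)+\delta_4(\e))|\hat\Delta|_1 + \bar\delta^2|\hat\Delta|_\infty + \text{(lower-order)}$. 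Controlling $\hat t$ and $\hat u$: from the optimality inequality, $\lambda\hat t \le |\t^*|_1 - |\hat\t|_1 + \lambda|\t^*|_2 + \nu|\t^*|_\infty \le |\hat\Delta_J|_1 + \lambda|\t^*|_2 + \nu|\t^*|_\infty$, and on the cone $|\hat\Delta_J|_1 \le \sqrt s|\hat\Delta_J|_2 \le \sqrt s|\hat\Delta|_q s^{1/2-1/q}$-type bounds tie everything back to $|\hat\Delta|_q$; similarly for $\hat u$. Then the definition of $\kappa_q(s,1+\lambda+\nu)$ gives $\kappa_q(s,1+\lambda+\nu)|\hat\Delta|_q \le |\Psi\hat\Delta|_\infty$, and plugging in the assumption $\kappa_q \ge cs^{-1/q}$ and $|\hat\Delta|_1 \le (2+\lambda+\nu)|\hat\Delta_J|_1 \le (2+\lambda+\nu)\sqrt s |\hat\Delta|_2$, together with $\mu,\tau = O(\sqrt{\log(c'p/\e)/n})$, yields \eqref{eq:th:MUnew1} after absorbing the $|\hat\Delta|_1$ and $|\hat\Delta|_\infty$ terms into the left-hand side (this absorption is where a smallness condition on $\kappa_1$ is implicitly needed, but for \eqref{eq:th:MUnew1} the crude bound $|\t^*|_\infty \le |\t^*|_2 \le |\t^*|_1$ lets one be generous).

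For the refined bound \eqref{eq:th:MUnew2}, the point is to avoid replacing $|\t^*|_2$ by $|\t^*|_1$ and $|\t^*|_\infty$ by $|\t^*|_1$. This is exactly where the additional hypothesis $\bar\delta^2 + \sqrt{\log(p/\e)/n} \le c_1\kappa_1(s,1+\lambda+\nu)$ enters: it ensures that the terms $(\delta_1(\e)+\delta_4(\e))|\hat\Delta|_1 + \bar\delta^2|\hat\Delta|_\infty$ appearing on the right of the bound for $|\Psi\hat\Delta|_\infty$, after using $|\hat\Delta|_1 \le C s \,\kappa_1^{-1}|\Psi\hat\Delta|_\infty$ (the $q=1$ sensitivity bound applied to $\hat\Delta\in C_J$) and $|\hat\Delta|_\infty\le|\hat\Delta|_1$, can be moved to the left-hand side with a coefficient bounded away from $1$, for $c_1$ small enough. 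After this absorption one is left with $|\Psi\hat\Delta|_\infty \le C(\mu|\t^*|_2 + \bar\delta^2|\t^*|_\infty + \tau)$, and then $\kappa_q(s,\cdot)\ge cs^{-1/q}$ gives \eqref{eq:th:MUnew2}. The prediction bound \eqref{th:MUnew2:pred} follows from $\fracn|X\hat\Delta|_2^2 = \brac{\Psi\hat\Delta,\hat\Delta} \le |\Psi\hat\Delta|_\infty|\hat\Delta|_1$, then inserting the $q=1$ case of \eqref{eq:th:MUnew2} for $|\hat\Delta|_1$ and the bound $|\Psi\hat\Delta|_\infty \le C(\sqrt{\log(c'p/\e)/n}(|\t^*|_2+1) + \bar\delta^2|\t^*|_\infty)$ just derived, and squaring.

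The main obstacle is the bookkeeping in the second part: one must carefully track how $\hat t$ and $\hat u$ re-enter the bound on $|\Psi\hat\Delta|_\infty$ through the constraint, bound them in terms of $|\hat\Delta|_1$ (hence, via the cone and $\kappa_1$, in terms of $|\Psi\hat\Delta|_\infty$ and the "noise" level), and verify that the resulting self-bounding inequality for $|\Psi\hat\Delta|_\infty$ can be closed under the stated smallness condition on $c_1$ — i.e. that the coefficient multiplying $|\Psi\hat\Delta|_\infty$ on the right is strictly less than $1$. Everything else is the routine sensitivity machinery.
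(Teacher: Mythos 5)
Your proposal follows the paper's proof essentially step for step: feasibility of $(\theta^*,|\theta^*|_2,|\theta^*|_\infty)$ on the intersection of the events of Lemmas \ref{lem2} and \ref{lem3a}, the cone property $\Delta=\hat\theta-\theta^*\in C_J(1+\lambda+\nu)$ and the bounds on $\hat t,\hat u$ from the optimality inequality, an upper bound on $|\fracn X^TX\Delta|_\infty$ of the form $2\mu|\theta^*|_2+2\bar\delta^2|\theta^*|_\infty+2\tau+\eta|\Delta|_1$ with $\eta=O(\sqrt{\log(p/\e)/n}+\bar\delta^2)$, and then the sensitivity step, with \eqref{eq:th:MUnew2} obtained by absorbing $\eta|\Delta|_1\le \eta|\fracn X^TX\Delta|_\infty/\kappa_1(s,1+\lambda+\nu)$ under the smallness condition, and \eqref{th:MUnew2:pred} from $\fracn|X\Delta|_2^2\le|\fracn X^TX\Delta|_\infty\,|\Delta|_1$.

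The one place where your write-up would not close as written is the derivation of \eqref{eq:th:MUnew1}: you control $|\Delta|_1$ via the cone inequality $|\Delta|_1\le(2+\lambda+\nu)|\Delta_J|_1\le(2+\lambda+\nu)\sqrt{s}\,|\Delta|_2$, which turns the estimate into a self-bounding inequality that can only be closed under the very smallness condition on $\kappa_1$ that \eqref{eq:th:MUnew1} is meant to avoid (your parenthetical acknowledges the issue but the ``crude bound'' you invoke, $|\theta^*|_\infty\le|\theta^*|_2\le|\theta^*|_1$, handles only the $\theta^*$ terms, not the $\eta|\Delta|_1$ term). The paper instead uses the elementary bound $|\Delta|_1\le|\hat\theta|_1+|\theta^*|_1\le(2+\lambda+\nu)|\theta^*|_1$, which follows directly from the optimality inequality you already wrote down; with this, $\eta|\Delta|_1$ becomes $C(\sqrt{\log(c'p/\e)/n}+\bar\delta^2)|\theta^*|_1$ and \eqref{eq:th:MUnew1} follows with no condition on $\kappa_1$. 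Everything else matches the paper's argument.
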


\begin{proof}
We proceed in three steps. Step 1 establishes initial relations and the fact that $\Delta=\hat\theta-\theta^*$ belongs to $C_J(1+\lambda+\nu)$. Step 2 provides a bound on $|\fracn X^TX\Delta|_\infty$. Step 3 establishes the rates of convergence stated in the theorem. We work on the event of probability at least $1-7\varepsilon$ where all the inequalities in Lemmas \ref{lem2} and  \ref{lem3a} are realized. Throughout the proof, $J=\{j : \theta^*_j\ne 0\}.$ We often make use of the inequalities $|\theta|_\infty \le |\theta|_2 \le |\theta|_1, \ \forall \theta\in\R^p.$ \\

\noindent {\it Step 1.} We first note that
\begin{equation}\label{eq:th:MUnew3}  \begin{array}{rl}
|\fracn Z^T(y-Z\theta^*)|_\infty & \leq |\fracn Z^T\xi|_\infty + |\fracn Z^TW\theta^*|_\infty\\
& \leq \delta_2(\varepsilon)+\delta_3(\varepsilon) + |\fracn Z^TW\theta^*|_\infty\end{array}
\end{equation}
with probability at least $1-2\varepsilon$ by Lemma \ref{lem2}. Next,
Lemma \ref{lem3a} and the fact that, due to (\ref{Wbound}), we have  $|\fracn \text{Diag}(W^TW)|_\infty\leq \bar\delta^2$ imply
\begin{equation}\label{eq:th:MUnew4} \begin{array}{rl}
|\fracn Z^TW\theta^*|_\infty & \leq |\fracn X^TW\theta^*|_\infty + |\fracn W^TW\theta^*|_\infty\\
& \leq |\fracn X^TW\theta^*|_\infty + |\fracn (W^TW-\text{Diag}(W^TW))\theta^*|_\infty +|\fracn \text{Diag}(W^TW)\theta^*|_\infty \\
& \leq \delta_1'(\varepsilon)|\theta^*|_2 + \delta_4'(\varepsilon)|\theta^*|_2 + \bar\delta^2|\theta^*|_\infty.
\end{array}
\end{equation}

\noindent Combining \eqref{eq:th:MUnew3} and  \eqref{eq:th:MUnew4} we get that $(\theta,t,u)=(\theta^*,|\theta^*|_2,|\theta^*|_\infty)$ is feasible for the problem (\ref{conicMUnew}), so that
\begin{equation}\label{eq:MU} |\hat \theta|_1 + \lambda |\hat \theta|_2 + \nu |\hat\theta|_\infty
 \le |\hat \theta|_1 + \lambda \hat t + \nu \hat u \le |\theta^*|_1 + \lambda |\theta^*|_2 + \nu |\theta^*
|_\infty.\end{equation}
From \eqref{eq:MU} we easily obtain
 $$|\hat\theta_{J^c}|_1 \leq (1+\lambda+\nu)|\hat\theta_J-\theta^*|_1.$$
 Arguments similar to \eqref{eq:MU} lead to
  $$\hat t - |\theta^*|_2 \leq \frac{|\Delta|_1 + \nu|\Delta|_\infty}{\lambda}\leq \frac{(1+\nu)}{\lambda}|\Delta|_1 \ \ \mbox{and} \ \ \hat u - |\theta^*|_\infty \leq \frac{|\Delta|_1 + \lambda|\Delta|_2}{\nu} \leq \frac{(1+\lambda)}{\nu}|\Delta|_1.$$

\noindent {\it Step 2.}  We have
$$\begin{array}{rl}
|\fracn X^TX\Delta|_{\infty} & \leq |\fracn Z^TX\Delta|_{\infty}+|\fracn W^TX\Delta|_{\infty}\\
& \leq |\fracn Z^TZ\Delta|_{\infty}+|\fracn Z^TW\Delta|_{\infty}+|\fracn W^TX\Delta|_{\infty}\\
& \leq |\fracn Z^T(y-Z\theta^*)|_\infty +|\fracn Z^T(y-Z\hat \theta)|_\infty+|\fracn Z^TW\Delta|_{\infty}+|\fracn W^TX\Delta|_{\infty}.\end{array}$$
The results of Step 1 and of Lemmas \ref{lem2} and \ref{lem3a} imply the following bounds
$$\begin{array}{rl}
 |\fracn Z^T(y-Z\theta^*)|_\infty & \leq  \mu |\theta^*|_2
+\bar\delta^2 |\theta^*|_\infty + \tau,\\
 |\fracn Z^T(y-Z\hat \theta)|_\infty & \leq \mu \hat t
+\bar\delta^2 \hat u + \tau \\
& \leq \mu|\t^*|_2 + \bar\delta^2|\t^*|_\infty + \tau + \{\mu(1+\nu)/\lambda + \bar\delta^2(1+\lambda)/\nu\}|\Delta|_1,\\
|\fracn W^TX\Delta|_{\infty}& \leq \delta_1|\Delta|_1,\\
|\fracn Z^TW\Delta|_{\infty}& \leq |\fracn X^TW\Delta|_{\infty}+|\fracn (W^TW-\text{Diag}(W^TW))\Delta|_{\infty}+|\fracn\text{Diag}(W^TW)\Delta|_{\infty}\\
& \leq \delta_1|\Delta|_1+\delta_4|\Delta|_1+\bar\delta^2|\Delta|_{\infty}.
\end{array}
$$

\noindent These relations and the inequality $|\Delta|_{\infty}\leq |\Delta|_1$ yield that
$$ |\fracn X^TX\Delta|_{\infty} \leq   2\mu |\theta^*|_2
+2\bar\delta^2 |\theta^*|_\infty + 2\tau   + (\bar\delta^2\{(1+\lambda+\nu)/\nu\}+\{(1+\nu)/\lambda\}\mu+2\delta_1+\delta_4)|\Delta|_1.$$

\noindent {\it Step 3.} Next note that $|\Delta|_1 \leq |\hat \theta|_1 + |\theta^*|_1 \leq (2+\lambda+\nu)|\theta^*|_1.$ Letting $$\eta= (\bar\delta^2\{(1+\lambda+\nu)/\nu\}+\{(1+\nu)/\lambda\}\mu+2\delta_1+\delta_4),$$ we have
$$ |\fracn X^TX\Delta|_{\infty} \leq 2\tau + (2\mu +2\bar\delta^2+ (2+\lambda+\nu)\eta)|\theta^*|_1.$$
By the definition of the $\ell_q$-sensitivity,
$$|\fracn X^TX\Delta|_{\infty}\geq \kappa_q(s,1+\lambda+\nu)|\Delta|_{q}.$$
Now, \eqref{eq:th:MUnew1} follows by combining the last two displays and the assumption on $ \kappa_q(s,1+\lambda+\nu)$.
To prove \eqref{eq:th:MUnew2}, we use that
$$\begin{array}{rl}
 |\fracn X^TX\Delta|_{\infty} & \leq   2\mu |\theta^*|_2
+2\bar\delta^2 |\theta^*|_\infty + 2\tau   + \eta|\Delta|_1 \\
&\leq 2\mu |\theta^*|_2
+2\bar\delta^2 |\theta^*|_\infty + 2\tau   + \eta|\fracn X^TX\Delta|_{\infty}/\kappa_1(s,1+\lambda+\nu).\end{array}$$

\noindent Under our conditions, $\eta/\kappa_1(s,1+\lambda+\nu) \leq c'$ for some $0<c'<1$. Thus, we have
\begin{equation}\label{eq:pred1} |\fracn X^TX\Delta|_{\infty} \leq c\big(\mu |\theta^*|_2
+\bar\delta^2 |\theta^*|_\infty + \tau\big),\end{equation}
which implies \eqref{eq:th:MUnew2} in view of the definition of the $\ell_q$-sensitivity and the assumption on $ \kappa_q(s,1+\lambda+\nu)$.\\

\noindent To show \eqref{th:MUnew2:pred}, note first that
\begin{equation}\label{eq:pred}
\fracn \left|X\Delta\right|_2^2 \le \fracn\left|X^TX\Delta\right|_\infty|\Delta|_1.
\end{equation}
By \eqref{eq:th:MUnew2} with $q=1$, 
\begin{equation*}\label{eq:pred2} |\Delta|_1 \leq Cs \sqrt{\frac{\log(c'p/\varepsilon)}{n}}(|\theta^*|_2+1) + Cs\bar\delta^2 |\theta^*|_\infty.
\end{equation*}
Combining this inequality with \eqref{eq:pred1} and \eqref{eq:pred} proves \eqref{th:MUnew2:pred}.
\end{proof}

\medskip

\begin{remark}
We have stated Theorem~\ref{th:MUnew} under Assumption (A4) to make the analysis streamlined with the previous literature, see  \cite{RT1}. However, inspection of the proofs shows that a more general condition can be used. The results of Theorem~\ref{th:MUnew} hold  with probability at least $1-7\varepsilon- \varepsilon'$ if instead  of Assumption (A4) we
require $W$ to satisfy:
$$ |\fracn{\rm Diag}(W^TW)|_\infty \leq \bar\delta^2 $$
with probability at least $1-\varepsilon'$, for some $\varepsilon'>0$.
\end{remark}

\medskip

\noindent Compared to \cite{RT1}, the results in Theorem \ref{th:MUnew} exploit the zero mean condition on the noise matrix~$W$. As in \cite{RT1}, the estimator is consistent as $\bar\delta$ goes to zero. In order to compare the rates in Theorem \ref{th:MUnew} with those for the MU selector, we recall that, by Theorem 3 in \cite{RT1}, the MU selector satisfies  $$|\hat \theta^{MU}-\theta^*|_q \leq Cs^{1/q} \sqrt{\frac{\log(c'p/\varepsilon)}{n}} + Cs^{1/q}(\bar \delta+\bar\delta^2) |\theta^*|_1$$
with probability close to 1.
While both rates share some terms, a term of order $s^{1/q}\bar\delta |\theta^*|_1$ appears only in the rate for the MU selector whereas a term of the order $s^{1/q} \sqrt{\log(c'p/\varepsilon)/n}|\theta^*|_1$ appears only for the $\{\ell_1,\ell_2,\ell_\infty\}$-MU selector. Therefore, the improvement upon the original MU selector is achieved whenever $\bar \delta \gg \sqrt{\log(c'p/\varepsilon)/n}$.\\

\noindent If  the additional condition $\bar\delta^2+ \sqrt{\log(c'p/\varepsilon)/n} \leq c_1\kappa_1(s,1+\lambda+\nu)$ holds, we can use the bound \eqref{eq:th:MUnew2} and a better accuracy is achieved by the proposed estimator. In particular, $|\theta^*|_1$ no longer drives the rate of convergence. The impact of $\bar\delta$ on this rate is  in the term
\begin{equation}\label{14}s^{1/q}\bar\delta^2 |\theta^*|_\infty \ \ \mbox{ instead of } \ \ s^{1/q}(\bar\delta+\bar\delta^2)|\t^*|_1
\end{equation}
for the MU selector. Furthermore, the rate of convergence of the new estimator also has a term of the form $|\theta^*|_2s^{1/q}\sqrt{\log(c'p/\varepsilon)/n}$. Thus the new estimator obtains a better accuracy by exploiting additional assumptions together with the fact that $\bar\delta |\theta^*|_1$ is of larger order than $\sqrt{\log(c'p/\varepsilon)/n}|\theta^*|_2$, which holds whenever $\bar \delta \gg \sqrt{\log(c'p/\varepsilon)/n}$. Finally, the impact of going down from the $\ell_1$-norm to the $\ell_2$- or $\ell_\infty$-norms is not negligible neither. For example, if all non-zero components of $\theta^*$ are equal to the same constant $a>0$, we have $|\t^*|_1=sa$ while $|\t^*|_2= a\sqrt{s}$, and $|\t^*|_\infty= a$. Then, the comparison in \eqref{14} is reduces to comparing
\begin{equation*}\label{*} s^{1/q}\bar\delta^2  \ \ \mbox{ versus } \ \ s^{1+1/q}(\bar\delta+\bar\delta^2),
\end{equation*}
featuring the maximum contrast between the two rates.
\\

\noindent Finally, note that the conic estimator $\hat \theta^C$ studied in \cite{BRT2014} can be also applied under the assumptions of this section. Indeed, our setting can be embedded into that of \cite{BRT2014} with $\widehat D$ being the identically zero ${p\times p}$ matrix, which means that we have an estimator of each $\sigma_j^2$ with an error bounded by $b=\bar\delta^2$. The results in \cite{BRT2014} assume $b=C\sqrt{(\log p)/n}$ but they do not apply to designs with $b$ of larger order. Comparing the bound \eqref{eq:th:MUnew2} in Theorem~\ref{th:MUnew} to the bound \eqref{6} yields that the conic estimator $\hat \theta^C$ achieves the same rate as our new estimator whenever $\bar\delta$  is smaller than or of the order $\big((\log p)/n\big)^{1/4}$. However, there is no bound for $\hat \theta^C$ available when $\bar\delta\gg \big((\log p)/n\big)^{1/4}$.\\



\section{$\{\ell_1,\ell_2,\ell_\infty\}$-compensated MU selector}\label{second}
\label{sec:conic}

In this section, we discuss a modification of the conic estimator proposed in \cite{BRT2014}. We introduce an additional $\ell_\infty$-norm regularization to better adapt to the estimation error in $\widehat D$. As discussed in the introduction, this is beneficial when the rate of convergence of $\widehat D$ to $D$ is slower than  $\sqrt{(\log p)/n}$, which is not covered by \cite{BRT2014}. Here we consider the same assumptions as in \cite{BRT2014} with the only difference that now we allow for any rate of convergence of $\widehat D$ to $D$. Thus, we replace Assumption (A4) by the following assumption on the availability of estimators for $\sigma_j^2$, $j=1,\ldots,p$.

{\it
\begin{itemize}
\item[(A5)] There exist statistics $\hat{\sigma}_j^2$ and positive numbers $b(\varepsilon)$ such that for any $0<\varepsilon<1$, we have
\begin{equation*}
\mathbb{P}\big[\max_{j=1,\dots,p}|\hat{\sigma}_j^2-\sigma_j^2|\geq
b(\e)\big]\leq \varepsilon.
\end{equation*}
\end{itemize}
}

\noindent In what follows, we fix $\e$ and set $$\mu=\delta_1'(\varepsilon)+\delta_4'(\varepsilon),~~\tau=\delta_2(\varepsilon)+\delta_3(\varepsilon) \ \ \mbox{and} \ \ \b = b(\e)+\delta_5(\varepsilon).$$ We are particularly interested in cases where $\b$ is of larger order than $\sqrt{(\log p)/n}$. To define the estimator, we consider the following minimization problem:

\begin{equation}\label{conicNew}
\min_{\theta,t,u}\{|\theta|_1+ \lambda t + \nu u : \,\, \theta\in\Theta, \,
\big|\fracn Z^T(y-Z\theta)+\widehat{D}\theta\big|_\infty\leq\mu t
+\b u+\tau, \ |\theta|_2 \le t, \ |\theta|_\infty \le u\}.
\end{equation}
Here, $\lambda >0$ and $\nu > 0$ are tuning constants and the minimum is taken over $(\theta,  t,  u) \in \mathbb{R}^{p}\times \R_+ \times \R_+$.

\medskip

\noindent  Let $(\hat\theta, \hat t, \hat u)$ be a solution of (\ref{conicNew}). We take $\hat\theta$ as estimator of $\theta^*$ and we  call it the $\{\ell_1,\ell_2,\ell_\infty\}$-compensated MU selector. The rates of convergence of this estimator are given in the next theorem.

\begin{theorem}\label{th:conic}
Let Assumptions (A1)-(A3), and (A5) hold. Assume that the true parameter $\t^*$ is $s$-sparse and belongs to $\Th$. Let $0<\e<1$ and $1\le q\le \infty$.
Suppose also that
\begin{equation}\label{ass_kappa}\kappa_q(s,1+\lambda+\nu)\ge cs^{-1/q}
\end{equation}
 for some constant $c>0$ and that
 \begin{equation}\label{hyps}
 s\le c_1\min\{\sqrt{n/\log (p/\e)}, 1/b(\e)\},\end{equation} for some small enough constant $c_1>0$. Let $\hat\theta$ be the $\{\ell_1,\ell_2,\ell_\infty\}$-compensated MU selector. Then, with probability at least $1-8\e$,
\begin{eqnarray}\label{11a}
|\hat\t-\t^*|_q&\le& C s^{1/q} \sqrt{\frac{\log (c'p/\e)}{n}} (|\theta^*|_2 +1)+Cs^{1/q}b(\varepsilon)|\t^*|_\infty,
\end{eqnarray}
for some constants $C>0$ and $c'>0$ (here we set $s^{1/\infty}=1$).\\

\noindent Under the same assumptions with $q=1$, the prediction error admits the following bound,
with the same probability:
\begin{eqnarray}\label{11c}
\fracn\big|X(\hat\t-\t^*)\big|_2^2&\le& C s \frac{\log (c'p/\e)}{n} (|\theta^*|_2 +1)^2+Csb^2(\e)|\t^*|_\infty^2\,.
\end{eqnarray}
\end{theorem}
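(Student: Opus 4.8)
The plan is to mimic closely the proof of Theorem~\ref{th:MUnew}, since the estimator (\ref{conicNew}) differs from (\ref{conicMUnew}) only by the presence of the compensation term $\widehat D\theta$ in the constraint and by replacing $\bar\delta^2$ with $\b$. I would proceed in three steps. \emph{Step 1 (feasibility and cone membership).} I work on the intersection of the events in Lemmas~\ref{lem2} and \ref{lem3a} together with the event from (A5), which has probability at least $1-8\varepsilon$. To show $(\theta^*,|\theta^*|_2,|\theta^*|_\infty)$ is feasible, I write
$\fracn Z^T(y-Z\theta^*)+\widehat D\theta^* = \fracn Z^T\xi + \fracn Z^TW\theta^* + \widehat D\theta^*$,
and bound $\fracn Z^TW\theta^*$ as in (\ref{eq:th:MUnew4}) but keeping the diagonal term $\fracn{\rm Diag}(W^TW)\theta^*$ instead of crudely bounding it; combining $\fracn{\rm Diag}(W^TW)\theta^* + \widehat D\theta^* - D\theta^*$ wait — more precisely, $\widehat D\theta^* - \fracn{\rm Diag}(W^TW)\theta^* = (\widehat D - D)\theta^* + (D - \fracn{\rm Diag}(W^TW))\theta^*$, whose $\ell_\infty$-norm is at most $(b(\varepsilon)+\delta_5(\varepsilon))|\theta^*|_\infty = \b|\theta^*|_\infty$. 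Together with the $\delta_1',\delta_4'$ bounds on $\fracn X^TW\theta^*$ and $\fracn(W^TW-{\rm Diag}(W^TW))\theta^*$ and the $\tau$ bound on $\fracn Z^T\xi$, this gives feasibility. Then the optimality inequality (the analogue of (\ref{eq:MU})) yields $|\hat\theta_{J^c}|_1\le(1+\lambda+\nu)|\Delta_J|_1$, i.e. $\Delta\in C_J(1+\lambda+\nu)$, and the bounds $\hat t-|\theta^*|_2\le\frac{1+\nu}{\lambda}|\Delta|_1$, $\hat u-|\theta^*|_\infty\le\frac{1+\lambda}{\nu}|\Delta|_1$.

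\emph{Step 2 (bound on $|\fracn X^TX\Delta|_\infty$).} As in the proof of Theorem~\ref{th:MUnew}, decompose
$\fracn X^TX\Delta = \fracn Z^TZ\Delta - \fracn Z^TW\Delta - \fracn W^TX\Delta$
and then $\fracn Z^TZ\Delta = \fracn Z^T(y-Z\theta^*) - \fracn Z^T(y-Z\hat\theta)$. The key difference is that here the constraint controls $\fracn Z^T(y-Z\theta)+\widehat D\theta$, so I introduce $\widehat D\Delta$ artificially: $\fracn X^TX\Delta - \widehat D\Delta = [\fracn Z^T(y-Z\theta^*)+\widehat D\theta^*] - [\fracn Z^T(y-Z\hat\theta)+\widehat D\hat\theta] - \fracn Z^TW\Delta - \fracn W^TX\Delta$, so that $|\fracn X^TX\Delta|_\infty \le |\widehat D\Delta|_\infty + (\text{feasibility terms})$. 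For the extra $|\widehat D\Delta|_\infty$ I split $\widehat D\Delta = D\Delta + (\widehat D - D)\Delta$ with $|D\Delta|_\infty\le \sigma_{\max}^2$-type bound — actually $|D\Delta|_\infty\le \max_j\sigma_j^2\,|\Delta|_\infty$; and I also need to handle $\fracn{\rm Diag}(W^TW)\Delta$ appearing inside $\fracn Z^TW\Delta$, writing $\fracn{\rm Diag}(W^TW)\Delta = (\fracn{\rm Diag}(W^TW)-D)\Delta + D\Delta$ with the first piece bounded by $\delta_5(\varepsilon)|\Delta|_\infty$. Collecting everything, the terms $D\Delta + \widehat D\Delta - \fracn{\rm Diag}(W^TW)\Delta$ combine so that the net coefficient of $|\Delta|_\infty$ (hence of $|\Delta|_1$) is controlled by $\b$ plus the usual $\sqrt{(\log p)/n}$-order quantities $\mu,\delta_1,\delta_4$. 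The outcome is
$|\fracn X^TX\Delta|_\infty \le 2\mu|\theta^*|_2 + 2\b|\theta^*|_\infty + 2\tau + \eta'|\Delta|_1$
for some $\eta'$ of order $\b + \sqrt{(\log p)/n}$ (up to constants depending on $\lambda,\nu,\sigma,\sigma_*$).

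\emph{Step 3 (rates).} Use $|\Delta|_1\le|\fracn X^TX\Delta|_\infty/\kappa_1(s,1+\lambda+\nu)$ and absorb the $\eta'|\Delta|_1$ term: under (\ref{hyps}) one has $s\,b(\varepsilon)\le c_1$ and $s\sqrt{\log(p/\varepsilon)/n}\le c_1$, and since $\kappa_1(s,1+\lambda+\nu)\ge c s^{-1}$ by (\ref{ass_kappa}) with $q=1$, we get $\eta'/\kappa_1\le C(s\b + s\sqrt{\log(p/\varepsilon)/n})\le$ a small constant $<1$, so the term can be moved to the left-hand side. This gives $|\fracn X^TX\Delta|_\infty \le C(\mu|\theta^*|_2 + \b|\theta^*|_\infty + \tau)$, and then the definition of the $\ell_q$-sensitivity with (\ref{ass_kappa}) gives $|\Delta|_q\le C s^{1/q}(\mu|\theta^*|_2+\b|\theta^*|_\infty+\tau)/c$; recalling $\mu,\tau$ are of order $\sqrt{\log(c'p/\varepsilon)/n}$ and $\b = b(\varepsilon)+\delta_5(\varepsilon) \le b(\varepsilon)+C\sqrt{\log(c'p/\varepsilon)/n}$ yields (\ref{11a}). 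For the prediction bound (\ref{11c}), use $\fracn|X\Delta|_2^2\le|\fracn X^TX\Delta|_\infty|\Delta|_1$ exactly as in (\ref{eq:pred}), insert the $q=1$ case of (\ref{11a}) for $|\Delta|_1$ and the bound $|\fracn X^TX\Delta|_\infty\le C(\mu|\theta^*|_2+\b|\theta^*|_\infty+\tau)$, and collect terms (using $(a+b)^2\le 2a^2+2b^2$).

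The main obstacle is the bookkeeping in Step 2: one must carefully track how the compensation term $\widehat D\theta$ interacts with $\fracn{\rm Diag}(W^TW)\theta$ and with the genuine $D\theta$, making sure that after all cancellations the only "large" quantity multiplying $|\Delta|_\infty$ (and, via $|\Delta|_\infty\le|\Delta|_1$, $|\Delta|_1$) is $\b$ rather than something uncontrolled like $\max_j\sigma_j^2$; the point is that $D\Delta$ appears with \emph{opposite signs} from the $\widehat D\Delta$ introduced on the left and the $\fracn{\rm Diag}(W^TW)\Delta$ hidden in $\fracn Z^TW\Delta$, so these genuine-variance contributions telescope and only the estimation errors $\widehat D-D$ and $\fracn{\rm Diag}(W^TW)-D$ survive, each $\le\b$ in the relevant norm. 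Once this cancellation is correctly identified, the remaining estimate is essentially identical to that in the proof of Theorem~\ref{th:MUnew} with $\bar\delta^2$ replaced by $\b$.
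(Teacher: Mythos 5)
Your proposal is correct and follows essentially the same route as the paper, which simply packages your Steps 1--2 as appendix Lemmas \ref{lem3}--\ref{lem5} (feasibility of $(\theta^*,|\theta^*|_2,|\theta^*|_\infty)$, cone membership with the bounds on $\hat t,\hat u$, and the bound $|\fracn X^TX\Delta|_\infty\le\mu_0+\mu_1|\Delta|_1+\mu_2|\theta^*|_2+\mu_\infty|\theta^*|_\infty$ with $\mu_1,\mu_\infty$ of order $\sqrt{(\log p)/n}+b(\e)$), including the cancellation of the genuine-variance terms $D\Delta$ between $\widehat D\Delta$ and $\fracn{\rm Diag}(W^TW)\Delta$ that you correctly identify as the crux. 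The only divergence is in Step 3: the paper absorbs the $\mu_1|\Delta|_1$ term by writing $|\Delta|_1\le(2+\lambda+\nu)s^{1-1/q}|\Delta|_q$ via the cone property and subtracting from $\kappa_q(s,1+\lambda+\nu)|\Delta|_q$, whereas you absorb via $|\Delta|_1\le|\fracn X^TX\Delta|_\infty/\kappa_1(s,1+\lambda+\nu)$, which requires $\kappa_1(s,1+\lambda+\nu)\ge c's^{-1}$ --- not literally assumed when $q\neq 1$, but it does follow from \eqref{ass_kappa} because the cone inequality gives $\kappa_1(s,u)\ge\kappa_q(s,u)/\bigl((1+u)s^{1-1/q}\bigr)$; you should state this one-line reduction explicitly.
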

\begin{proof}
Throughout the proof, we assume that we are on the event of probability
at least $1-8\varepsilon$ where the results of Lemmas \ref{lem3}, \ref{lem4} and \ref{lem5} in the Appendix hold. Property (\ref{c1}) in Lemma \ref{lem4} implies that $\Delta=\hat\t - \t^*$ is in the cone $C_J(1+\lambda+\nu)$, where $J=\{j : \theta^*_j\ne 0\}.$ Therefore, by the definition of the $\ell_q$-sensitivity and Lemma~\ref{lem5}, we have
$$
\kappa_q(s,1+\lambda+\nu)|\Delta|_q\le \big|\fracn X^TX\Delta\big|_\infty \le \mu_0+ \mu_1 |\hat\theta-\theta^*|_1 +  \mu_2 |\t^*|_2+ \mu_\infty|\t^*|_\infty,
$$
where $\mu_0$ and $\mu_2$ are of order $\sqrt{\fracn\log (c'p/\e)}$, and $\mu_1$ and $\mu_\infty$ are of order $\sqrt{\fracn\log (c'p/\e)}+b(\e)$. Using again (\ref{c1}), we have
\begin{eqnarray*}
|\Delta|_1 &=& |\Delta_{J^c}|_1+|\Delta_J|_1\le (2+\lambda+\nu)|\Delta_J|_1\\
&\le & (2+\lambda+\nu)s^{1-1/q}|\Delta_{J}|_q \le (2+\lambda+\nu) s^{1-1/q}|\Delta|_q.
\end{eqnarray*}
It follows that
$$
(\kappa_q(s,1+\lambda+\nu)- (2+\lambda+\nu)\mu_1 s^{1-1/q})|\Delta|_q \le  \mu_0+\mu_2 |\theta^*|_2 + \mu_\infty |\t^*|_\infty,
$$
which implies, by \eqref{ass_kappa},
$$
(c- (2+\lambda+\nu)\mu_1 s) s^{-1/q}|\Delta|_q \le   \mu_0+\mu_2 |\theta^*|_2 + \mu_\infty |\t^*|_\infty,
$$
in view of the assumptions of the theorem. Recall that $\mu_1 \le a \{\sqrt{\log (c'p/\e)/n}+b(\e)\}$, where $a>0$ is a constant. Therefore, since we assume that $s \leq c_1 \min\{\sqrt{n/\log (p/\e)}, 1/b(\e)\}$,  (\ref{11a}) follows if $c_1$ is small enough.\\

\noindent To prove (\ref{11c}), we use \eqref{eq:pred}. 
Remark that from (\ref{11a}) with $q=1$, we have
$$|\Delta|_1\leq C s \sqrt{\frac{\log (c'p/\e)}{n}} (|\theta^*|_2 +1) + Csb(\varepsilon)|\t^*|_\infty.$$
Lemma \ref{lem5} in the Appendix yields
\begin{equation}\label{sb}
\big|\fracn X^TX\Delta\big|_\infty \le \mu_0+ \mu_1 |\hat\theta-\theta^*|_1 +  \mu_2 |\t^*|_2+ \mu_\infty|\t^*|_\infty.
\end{equation}
Combining the above bound for $|\Delta|_1$ and (\ref{sb}), we get
$$
\fracn \left|X\Delta\right|_2^2 \le C\frac{s\log (c'p/\e)}{n} (|\theta^*|_2 +1)^2 + Csb^2(\varepsilon)|\t^*|_\infty^2
$$
since $\mu_1 s \leq C''$ for some constant $C''>0$ under our assumptions. This proves (\ref{11c}).
\end{proof}

\bigskip

\noindent Theorem~\ref{th:conic} generalizes the results in \cite{BRT2014} to estimators $\widehat D$ that converge with rate $b(\e)$ of larger order than $\sqrt{(\log p)/n}$. At the same time, if $b(\e)$ is smaller than $\sqrt{(\log p)/n}$, both the conic estimator $\hat \theta^C$ of \cite{BRT2014} and the $\{\ell_1,\ell_2,\ell_\infty\}$-compensated MU selector achieve the same rate of convergence.\\

\noindent For such designs that condition (\ref{hyps}) does not hold, the conclusions of Theorem \ref{th:conic} need to be slightly modified  as shown in the next theorem. \\ 

\begin{theorem}\label{th:conicRelaxed}
Let Assumptions (A1)-(A3), and (A5) hold. Assume that the true parameter $\t^*$ is $s$-sparse and belongs to $\Th$.
Let $0<\e<1$ and $1\le q\le \infty$. Let $\hat\theta$ be the $\{\ell_1,\ell_2,\ell_\infty\}$-compensated MU selector. Then, with probability at least $1-8\e$,
\begin{eqnarray}\label{eq:th:conicRelaxed:1}
|\hat\t-\t^*|_q&\le& \frac{C}{\kappa_q(s,1+\lambda+\nu)} \left\{\sqrt{\frac{\log (c'p/\e)}{n}} (|\theta^*|_1 +1)+b(\varepsilon)|\theta^*|_1\right\},
\end{eqnarray}
for some constants $C>0$ and $c'>0$, and
the prediction error admits the following bound,
with the same probability:
\begin{eqnarray}\label{eq:th:conicRelaxed:2}
\fracn\big|X(\hat\t-\t^*)\big|_2^2\le C \min \left\{ \frac{\frac{\log (c'p/\e)}{n} (|\theta^*|_1 +1)^2+b^2(\e)|\t^*|_1^2}{\kappa_1(s,1+\lambda+\nu)}  , \, \sqrt{\frac{\log (c'p/\e)}{n}} (|\theta^*|_1 +1)^2+b(\e)|\t^*|_1^2\right\}.
\end{eqnarray}
\end{theorem}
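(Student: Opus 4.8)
The plan is to mimic the proof of Theorem~\ref{th:conic}, but to drop the sparsity‑absorption step that relied on \eqref{hyps} and replace it by a crude, unconditional bound on $|\Delta|_1$. First I would work on the event of probability at least $1-8\varepsilon$ on which the conclusions of Lemmas~\ref{lem3}, \ref{lem4} and \ref{lem5} hold, set $\Delta=\hat\theta-\theta^*$ and $J=\{j:\theta^*_j\neq0\}$. Property~\eqref{c1} of Lemma~\ref{lem4} gives $\Delta\in C_J(1+\lambda+\nu)$, so by the definition of the $\ell_q$-sensitivity, $\kappa_q(s,1+\lambda+\nu)|\Delta|_q\le|\fracn X^TX\Delta|_\infty$, while Lemma~\ref{lem5} provides $|\fracn X^TX\Delta|_\infty\le\mu_0+\mu_1|\Delta|_1+\mu_2|\theta^*|_2+\mu_\infty|\theta^*|_\infty$ with $\mu_0,\mu_2$ of order $\sqrt{\log(c'p/\varepsilon)/n}$ and $\mu_1,\mu_\infty$ of order $\sqrt{\log(c'p/\varepsilon)/n}+b(\varepsilon)$.

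The point where the argument must diverge from Theorem~\ref{th:conic} is the term $\mu_1|\Delta|_1$: without \eqref{hyps} it cannot be absorbed by $\kappa_q$, so I would instead bound it brutally. Feasibility of $(\theta^*,|\theta^*|_2,|\theta^*|_\infty)$ for \eqref{conicNew} and optimality of $(\hat\theta,\hat t,\hat u)$ give $|\hat\theta|_1\le|\hat\theta|_1+\lambda\hat t+\nu\hat u\le(1+\lambda+\nu)|\theta^*|_1$, hence $|\Delta|_1\le(2+\lambda+\nu)|\theta^*|_1$. Substituting this into the Lemma~\ref{lem5} bound and using $|\theta^*|_\infty\le|\theta^*|_2\le|\theta^*|_1$ together with the stated orders of the $\mu$'s collapses everything to $|\fracn X^TX\Delta|_\infty\le C\big(\sqrt{\log(c'p/\varepsilon)/n}\,(|\theta^*|_1+1)+b(\varepsilon)|\theta^*|_1\big)$; dividing by $\kappa_q(s,1+\lambda+\nu)$ yields \eqref{eq:th:conicRelaxed:1}.

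For the prediction error I would start from $\fracn|X\Delta|_2^2\le\fracn|X^TX\Delta|_\infty\,|\Delta|_1$ (inequality \eqref{eq:pred}) and bound $|\Delta|_1$ in the two available ways: once via \eqref{eq:th:conicRelaxed:1} with $q=1$ (together with $(a+b)^2\le 2a^2+2b^2$), which produces the $1/\kappa_1$-term inside the minimum of \eqref{eq:th:conicRelaxed:2}, and once via the crude bound $|\Delta|_1\le(2+\lambda+\nu)|\theta^*|_1$, which produces the $\kappa$-free term; taking the smaller of the two finishes the proof. I do not foresee a genuine obstacle here: the computation is essentially the bookkeeping of Theorem~\ref{th:conic}, and the only conceptual content is recognizing that discarding \eqref{hyps} forces the worse $|\theta^*|_1$ scaling and the explicit $1/\kappa_q$ factor --- exactly what the statement records. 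The one place to be slightly careful is that the two prediction bounds are genuinely incomparable (depending on whether $\kappa_1(s,1+\lambda+\nu)$ is small), so the minimum in \eqref{eq:th:conicRelaxed:2} is not redundant.
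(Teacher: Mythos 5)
Your proposal is correct and follows essentially the same route as the paper's own proof: the cone property from Lemma~\ref{lem4}, the bound of Lemma~\ref{lem5}, the crude estimate $|\Delta|_1\le(2+\lambda+\nu)|\theta^*|_1$ from feasibility and optimality, and the two alternative bounds on $|\Delta|_1$ in \eqref{eq:pred} to produce the minimum in \eqref{eq:th:conicRelaxed:2}. No gaps.
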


\begin{proof}
Again, throughout the proof, we assume that we are on the event of probability at least $1-8\varepsilon$ where the results of Lemmas \ref{lem3}, \ref{lem4} and \ref{lem5} in the Appendix hold. Property (\ref{c1}) in Lemma \ref{lem4} implies that $\Delta=\hat\t - \t^*$ is in the cone $C_J(1+\lambda+\nu)$, where $J=\{j : \theta^*_j\ne 0\}.$ Since 
\begin{eqnarray}\label{eq:th:conicRelaxed:3}|\Delta|_1\leq |\hat \theta|_1 + |\t^*|_1 \leq \{ |\t^*|_1 +\lambda|\t^*|_2+\nu|\t^*|_\infty\} + |\theta^*|_1 \leq (2+\lambda+\nu)|\theta^*|_1,
\end{eqnarray}  we obtain
$$
 \big|\fracn X^TX\Delta\big|_\infty \le \mu_0 + \mu_1 |\Delta|_1 +  \mu_2 |\t^*|_2+ \mu_\infty|\t^*|_\infty \leq \mu_0+(\mu_1+\mu_2+\mu_\infty)(2+\lambda+\nu)|\theta^*|_1.
$$
Therefore
$$
\kappa_q(s,1+\lambda+\nu)|\Delta|_q \le \mu_0+(\mu_1+\mu_2+\mu_\infty)(2+\lambda+\nu)|\theta^*|_1,
$$
which implies \eqref{eq:th:conicRelaxed:1}. Note also that, due to \eqref{eq:pred},
the above displays immediately imply the bound on the prediction risk given by the second term under the minimum in  \eqref{eq:th:conicRelaxed:2}. The first term under the minimum in  \eqref{eq:th:conicRelaxed:2} is obtained by combining \eqref{eq:pred}, \eqref{eq:th:conicRelaxed:1} with $q=1$, and \eqref{eq:th:conicRelaxed:3}.
\end{proof}

\section{Simulations}\label{sec:simulations}

This section aims to illustrate the finite sample performance of the proposed estimators. We will focus on the $\{\ell_1,\ell_2,\ell_\infty\}$-compensated MU selector only. We consider the following data generating process
$$ y_i = x_i^T\theta^* + \xi_i, \ \ \ \ z_i = x_i + w_i.$$
Here, $\xi_i, w_i, x_i$ are independent and $\xi_i \sim {\cal N}(0,\sigma^2)$, $w_{i}\sim {\cal N}(0,\sigma_*^2I_{p\times p})$, $x_i\sim {\cal N}(0,\Sigma)$ where $I_{p\times p}$ is the identity matrix and $\Sigma$ is $p\times p$ matrix with elements $\Sigma_{ij}=\rho^{|i-j|}$.
We consider the vector of unknown parameters $\theta^*= 1.25(1,1,1,1,1,0,\ldots,0)^T $. We set $\sigma = 0.128$, $\sigma_*^2=0.5$, and $\rho=0.25$. We assume that $\sigma$ is known and we set $\hat D = D = \sigma_*^2I_{p\times p}$. The penalty parameters are set as $\tau = \sigma\sqrt{\log(p/\varepsilon)/n}$, $b(\varepsilon)=\sigma_*^2\sqrt{\log(p/\varepsilon)/n}$, for $\varepsilon = 0.05$.\\

\noindent In our first set of simulations, we illustrate the finite sample performance of the proposed estimator by setting $\lambda = \nu \in \{0.25, 0.5, 0.75,  1\}$. The $\{\ell_1,\ell_2,\ell_\infty\}$-compensated MU selector will be denoted by $\{\ell_1,\ell_2,\ell_\infty\}$. We compare its performance with other recent proposals in the literature, namely the conic estimator (denoted as Conic ($\lambda$) for $\lambda = 0.25, 0.5, 0.75, 1$), and the Compensated MU selector (cMU). We also provide the (infeasible) Dantzig selector which knows $X$ (Dantzig X) and the Dantzig selector that uses only $Z$ (Dantzig Z) as additional benchmark for the performance. 

\begin{table}[h!]
\begin{center}
\begin{tabular}{l|ccc|ccc}

 &  \multicolumn{3}{|c|}{$n=300$ and $p=10$} & \multicolumn{3}{|c}{$n=300$ and $p=50$}\\

   Method ($\lambda=\nu$)  & Bias  & RMSE  &  PR     & Bias & RMSE  &  PR\\
   \hline
  Dantzig X & 0.0265486    &  0.0321528      & 0.0349530 & 0.0301636    &  0.0349420      & 0.0386731 \\
  Dantzig Z & 0.5892699    &  0.6218173      & 0.7118256 & 0.6032541    &  0.7246990      & 0.7526539 \\
  cMU  & 0.6002801  &  0.6526144      & 0.7375240 & 0.6684987  &  0.7074681      & 0.8148175 \\
  Conic (0.25) & 1.9261733   & 1.9567318 &     2.3165088  & 1.9952936   & 2.0190105 &     2.4085353 \\
  $\{\ell_1,\ell_2,\ell_\infty\}$(0.25)  & 1.7922416    &  1.8349666      & 2.1453927  & 1.9035308    &  1.9325326      & 2.2875796 \\
  Conic (0.5)  & 0.3184083   & 0.4161670 &     0.4326569   & 0.3668194   & 0.4395404 &     0.4781078 \\
  $\{\ell_1,\ell_2,\ell_\infty\}$ (0.5)  & 0.2137347    &  0.3505829      & 0.3382480 & 0.3489980    &  0.4491837      & 0.4605638 \\
  Conic (0.75)  & 0.3179691   & 0.4158134 &     0.4322128 & 0.3668194   & 0.4395404 &     0.4781078 \\
  $\{\ell_1,\ell_2,\ell_\infty\}$ (0.75)  & 0.2085334    &  0.3459298      & 0.3330411 & 0.2699453    &  0.3786945      & 0.3896168 \\
  Conic (1) & 0.3179691   & 0.4158134 &     0.4322128 & 0.3661721   & 0.4390614 &     0.4773173 \\
  $\{\ell_1,\ell_2,\ell_\infty\}$ (1)  & 0.2078373    &  0.3455287      & 0.3324356 & 0.2483137    &  0.3691060      & 0.3736929 \\
\hline
  \end{tabular}
  \caption{\footnotesize Simulation results for 100 replications. For each estimator we provide average bias (Bias), average root-mean squared error (RMSE), and average prediction risk (PR).}\label{Table:MC1}
\end{center}
\end{table}

\begin{table}[h!]
\begin{center}
\begin{tabular}{l|ccc|ccc}

    &  \multicolumn{3}{|c|}{$n=300$ and $p=100$} & \multicolumn{3}{|c}{$n=300$ and $p=300$}\\
   Method ($\lambda=\nu$)  & Bias  & RMSE  &  PR     & Bias & RMSE  &  PR\\
\hline
  Dantzig X & 0.0317776    &  0.0366155      & 0.0403419 & 0.0344617    &  0.0387848      & 0.0436396 \\
  Dantzig Z & 0.6039890    &  0.8364059      & 0.7910512 & 0.6334052    &  1.0775665      & 0.8824695 \\
  cMU  & 0.6908240  &  0.7359536      & 0.8472447 & 0.7228791  &  0.7653174      & 0.8843476 \\
  Conic (0.25)  & 2.0196204   & 2.0428152 &     2.4429977 & 2.0833543   & 2.0985979 &     2.5281871 \\
  $\{\ell_1,\ell_2,\ell_\infty\}$(0.25)  & 1.9363225    &  1.9646153      & 2.3321286 & 2.0016163    &  2.0247679      & 2.4181903 \\
  Conic (0.5)  & 0.5032353   & 0.6479385 &     0.6390150  & 0.6809176   & 0.8886359 &     0.8367831 \\
  $\{\ell_1,\ell_2,\ell_\infty\}$ (0.5)  & 0.4170439    &  0.5207218      & 0.5436218 & 0.4694103    &  0.5507253      & 0.5975351 \\
  Conic (0.75)  & 0.3849631   & 0.4699933 &     0.5082582 & 0.4195124   & 0.4964321 &     0.5428568 \\
  $\{\ell_1,\ell_2,\ell_\infty\}$ (0.75) & 0.3250997    &  0.4312186      & 0.4512656 & 0.3869566    &  0.4747343      & 0.5104562 \\
  Conic(1)  & 0.3811186   & 0.4673239 &     0.5043246  & 0.4047078   & 0.4846393 &     0.5271225 \\
  $\{\ell_1,\ell_2,\ell_\infty\}$ (1)  & 0.2907918    &  0.4155573      & 0.4242000 & 0.3573025    &  0.4569624      & 0.4819208 \\
   \hline
  \end{tabular}
  \caption{\footnotesize Simulation results for 100 replications. For each estimator we provide average bias (Bias), average root-mean squared error (RMSE), and average prediction risk (PR).}\label{Table:MC2}
\end{center}
\end{table}

\noindent Tables \ref{Table:MC1} and \ref{Table:MC2} provide the performance of the proposed estimator when $\lambda=\nu$ and the performance of various benchmarks. As discussed in the literature, ignoring the error-in-variables issue can lead to worse performance as seen from the performance of Dantzig Z compared to the (infeasible) Dantzig X. The conic estimator performs better than the compensated MU selector (cMU) when $\lambda \in \{0.5, 0.75, 1\}$. The comparison of the proposed estimator and the conic estimator is easier to establish as we can parametrize them by $\lambda$ (as we set $\lambda = \nu$). In this case the conic estimator penalizes more aggressively the uncertainty of not knowing $\sigma_j^2$. In essentially all cases\footnote{The conic estimator performs slightly better only with respect to RMSE in the case of $\lambda =0.5$. For all other parameters and metrics, the proposed estimator performs slightly better or substantially better.} the proposed estimator yields improvements. The introduction of $\ell_\infty$-norm regularization seems to alleviate regularization bias. Nonetheless, when setting $\lambda =0.25$  both the conic estimator and the proposed estimator fail in the experiment. This failure occurs by not having enough penalty to control $t - |\theta|_2$ and $u - |\theta|_\infty$ which leads to a large right hand side $\mu t+\b u+\tau$ in  the  constraint
$$ \big|\fracn Z^T(y-Z\theta)+\widehat{D}\theta\big|_\infty\leq\mu t
+\b u+\tau$$
in (\ref{conicNew}) and similarly the right hand side  $\mu t+\tau$ in (\ref{conic}). In turn, this leads to substantial regularization bias and therefore underfitting. In fact, detailed inspection of estimators in that case reveals that coefficients are very close to zero for both the conic and the proposed estimator.\\

\noindent In the second set of simulations, we explore the performance of the proposed estimator for the case $\lambda\neq \nu$.
Moreover, we also study a modified estimator that contains safeguard constraints. These constraints aim to mitigate the problem discussed above. The safeguard constraints are described in Remark \ref{Rem:Safeguard} below. We denote by $\{\ell_1,\ell_2,\ell_\infty\}^*$ the estimator computed with the safeguards.

\medskip

\begin{remark}[Safeguard Constraints]\label{Rem:Safeguard}
In order to further bound $t$ and $u$, we can add constraints that exploit that $|\cdot |_q \leq |\cdot|_1$ for $q\geq 1$. Therefore, the constraints
$$ \theta = \theta^+ - \theta^-, \ \ \theta^+\geq 0, \ \ \theta^-\geq 0, \ \ w = \sum_{j=1}^p\{\theta^++\theta^-\}, \ \  t \leq w, \ \ \mbox{and} \ \  u \leq w$$
preserve the convexity of the optimization problem and can potentially yield additional performance.\end{remark}

\medskip

\noindent
We consider the same design as before and we explore some combinations of values $$(\lambda,\nu)\in \{0.25,0.5,0.75,1\}\times\{0.25,0.5,0.75,1\}$$ for both proposed estimators (with and without the safeguard constraints).\\

\begin{table}[h!]
\begin{center}
\begin{tabular}{l|ccc|ccc}

  &  \multicolumn{3}{|c|}{$n=300$ and $p=10$} & \multicolumn{3}{|c}{$n=300$ and $p=50$}\\

   Method $(\lambda,\nu)$   & Bias  & RMSE  &  PR     & Bias & RMSE  &  PR\\
   \hline
  $\{\ell_1,\ell_2,\ell_\infty\}$ \ (1,1)  & 0.2078373    &  0.3455287      & 0.3324356 & 0.2483137    &  0.3691060      & 0.3736929 \\
   $\{\ell_1,\ell_2,\ell_\infty\}^*$  (1,1)   & 0.2078373    &  0.3455287      & 0.3324356  & 0.2483137    &  0.3691060      & 0.3736929\\
  $\{\ell_1,\ell_2,\ell_\infty\}$ \ (1,0.5)  & 0.2534465    &  0.3997941      & 0.3725479  & 0.5214272    &  0.7086267      & 0.6514348 \\
  $\{\ell_1,\ell_2,\ell_\infty\}^*$  (1,0.5)  & 0.2392491    &  0.3623416      & 0.3569492 & 0.3980543    &  0.4729990      & 0.5112310 \\
   $\{\ell_1,\ell_2,\ell_\infty\}$ \ (0.5,1) & 0.2077228    &  0.3455690      & 0.3322088 & 0.2448911    &  0.3690180      & 0.3723095 \\
  $\{\ell_1,\ell_2,\ell_\infty\}^*$  (0.5,1)   & 0.2077228    &  0.3455690      & 0.3322088 & 0.2448911    &  0.3690180      & 0.3723095 \\
$\{\ell_1,\ell_2,\ell_\infty\}$ \ (0.75,0.75)  & 0.2085334    &  0.3459298      & 0.3330411 & 0.2699453    &  0.3786945      & 0.3896168 \\
$\{\ell_1,\ell_2,\ell_\infty\}^*$ (0.75,0.75)  & 0.2085334    &  0.3459297      & 0.3330411 & 0.2699453    &  0.3786945      & 0.3896168 \\
$\{\ell_1,\ell_2,\ell_\infty\}$ \ (0.25,1)    & 0.2078663    &  0.3458796      & 0.3322444 & 0.2439496    &  0.3684173      & 0.3715836 \\
  $\{\ell_1,\ell_2,\ell_\infty\}^*$  (0.25,1)  & 0.2078663    &  0.3458796      & 0.3322444 & 0.2439496    &  0.3684173      & 0.3715836 \\
  $\{\ell_1,\ell_2,\ell_\infty\}$ \ (0.5,0.5)  & 0.2137347    &  0.3505829      & 0.3382480 & 0.3489980    &  0.4491837      & 0.4605638 \\
  $\{\ell_1,\ell_2,\ell_\infty\}^*$  (0.5,0.5)   & 0.2137347    &  0.3505827      & 0.3382479 & 0.3382218    &  0.4225007      & 0.4490958 \\
  $\{\ell_1,\ell_2,\ell_\infty\}$ \ (0.25,0.5)    & 0.2114159    &  0.3502938      & 0.3369809 & 0.3188151    &  0.4086438      & 0.4313163\\
  $\{\ell_1,\ell_2,\ell_\infty\}^*$  (0.25,0.5) & 0.2114159    &  0.3502938      & 0.3369809 & 0.3188151    &  0.4086438      & 0.4313163 \\

 $\{\ell_1,\ell_2,\ell_\infty\}$ \ (0.25,0.25)  & 1.7922416    &  1.8349666      & 2.1453927  & 1.9035308    &  1.9325326      & 2.2875796 \\
 $\{\ell_1,\ell_2,\ell_\infty\}^*$ (0.25,0.25)   & 0.5477221    &  0.6050091      & 0.6780050 & 0.6151622    &  0.6574460      & 0.7535672  \\
   \hline
  \end{tabular}
  \caption{\footnotesize Simulation results for 100 replications. For each estimator we provide average bias (Bias), average root-mean squared error (RMSE), and average prediction risk (PR).}\label{Table:MC3}
\end{center}
\end{table}

\begin{table}[h!]
\begin{center}
\begin{tabular}{l|ccc|ccc}

  &  \multicolumn{3}{|c|}{$n=300$ and $p=100$} & \multicolumn{3}{|c}{$n=300$ and $p=300$}\\

   Method $(\lambda,\nu)$   & Bias  & RMSE  &  PR     & Bias & RMSE  &  PR\\
   \hline
  $\{\ell_1,\ell_2,\ell_\infty\}$ \ (1,1)  & 0.2907918    &  0.4155573      & 0.4242000 & 0.3573025    &  0.4569624      & 0.4819208 \\
  $\{\ell_1,\ell_2,\ell_\infty\}^*$  (1,1)   & 0.2907918    &  0.4155573      & 0.4242000 & 0.3573084    &  0.4569653      & 0.4819268\\
  $\{\ell_1,\ell_2,\ell_\infty\}$ \ (1,0.5)   & 0.6707248 &  0.8687948& 0.8260765 & 1.0995021&  1.2843061 & 1.3224733 \\
  $\{\ell_1,\ell_2,\ell_\infty\}^*$  (1,0.5)  & 0.4713115 &  0.5469680 & 0.5998890 & 0.5813572 &  0.6440090 & 0.7214057 \\
  $\{\ell_1,\ell_2,\ell_\infty\}$ \ (0.5,1)   & 0.2813842    &  0.4123716      & 0.4183949  & 0.3434854    &  0.4501870      & 0.4705890 \\
  $\{\ell_1,\ell_2,\ell_\infty\}^*$ (0.5,1)   & 0.2813842    &  0.4123716      & 0.4183949  & 0.3434113    &  0.4501763      & 0.4705578 \\
  $\{\ell_1,\ell_2,\ell_\infty\}$ \ (0.75,0.75) & 0.3250997    &  0.4312186      & 0.4512656 & 0.3869566    &  0.4747343      & 0.5104562 \\
$\{\ell_1,\ell_2,\ell_\infty\}^*$ (0.75,0.75)   & 0.3250997    &  0.4312186      & 0.4512656 & 0.3869382    &  0.4747230      & 0.5104387 \\
  $\{\ell_1,\ell_2,\ell_\infty\}$ \ (0.25,1) & 0.2791982    &  0.4113275      & 0.4166136  & 0.3392525    &  0.4482318      & 0.4674070 \\
  $\{\ell_1,\ell_2,\ell_\infty\}^*$ (0.25,1)  & 0.2790174    &  0.4111578      & 0.4163974 & 0.3386830    &  0.4478506      & 0.4667958  \\
  $\{\ell_1,\ell_2,\ell_\infty\}$ \ (0.5,0.5)  & 0.4170439    &  0.5207218      & 0.5436218 & 0.4694103    &  0.5507253      & 0.5975351 \\
  $\{\ell_1,\ell_2,\ell_\infty\}^*$  (0.5,0.5) & 0.3977208    &  0.4819926      & 0.5218939 & 0.4590288    &  0.5311414      & 0.5863702 \\
  $\{\ell_1,\ell_2,\ell_\infty\}$ \ (0.25,0.5)  & 0.3726889    &  0.4645454      & 0.4983916 & 0.4324230    &  0.5082528      & 0.5569916 \\
  $\{\ell_1,\ell_2,\ell_\infty\}^*$  (0.25,0.5)  & 0.3718829    &  0.4639374      & 0.497499  & 0.4357035    &  0.5115011      & 0.5608083 \\
  $\{\ell_1,\ell_2,\ell_\infty\}$ \ (0.25,0.25)  & 1.9363225    &  1.9646153      & 2.3321286 & 2.0016163    &  2.0247679      & 2.4181903 \\
   $\{\ell_1,\ell_2,\ell_\infty\}^*$ (0.25,0.25)   & 0.6365329    &  0.6854681      & 0.7851421  & 0.6669880    &  0.7127657      & 0.8198798 \\
   \hline
  \end{tabular}
  \caption{\footnotesize Simulation results for 100 replications. For each estimator we provide average bias (Bias), average root-mean squared error (RMSE), and average prediction risk (PR).}\label{Table:MC4}
\end{center}
\end{table}

\noindent Tables \ref{Table:MC3} and \ref{Table:MC4} show the performance for different values of $\lambda$ and $\nu$. We note that these parameters seem to have different impact on the finite sample performance even if $\lambda+\nu$ is kept constant. Importantly, we observe that the addition of safeguard constraints virtually always leads to improvements although small (even zero sometimes) for most of the tested parameter values. In the case $\lambda < \nu$, using safeguard constraints makes almost no difference and overall performance of both estimators is better. In contrast, the estimators perform worse when $\lambda > \nu$ and the safeguard constraints lead to improvements. Finally, as expected, the safeguard constraints improve substantially the performance when $\lambda = \nu = 0.25$. In that case, the performance becomes comparable to that of the cMU estimator. Essentially, the safeguard constraints help to avoid severe underfitting. They are very helpful when the performance is below of what can be achieved. Nonetheless, we recommend to keep them in all cases as it does not impact negatively the estimator and the additional computational burden seems minimal.

%
%

\section*{Appendix: Auxiliary lemmas}\label{appendixA}

In what follows, we write for brevity $\d_i=\d_i(\e)$, $\d_i'=\d_i'(\e)$, and we set $\Delta=\hat\t - \t^*$, $J=\{j : \theta^*_j\ne 0\}.$

\begin{lemma}\label{lem3}
Assume (A1)-(A3) and (A5). Then with probability at least $1-6\varepsilon$, the pair $(\theta, t, u)=(\t^*, |\t^*|_2,|\t^*|_\infty) $ belongs to the feasible set of the minimization problem
(\ref{conicNew}).
\end{lemma}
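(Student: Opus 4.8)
\textbf{Proof proposal for Lemma~\ref{lem3}.}
The plan is to show that the candidate point $(\theta,t,u)=(\t^*,|\t^*|_2,|\t^*|_\infty)$ satisfies every constraint of \eqref{conicNew} on a high-probability event, simply by bounding the main quantity $\big|\fracn Z^T(y-Z\t^*)+\widehat D\t^*\big|_\infty$. First I would substitute $y=X\t^*+\xi$ and $Z=X+W$ so that $y-Z\t^*=\xi-W\t^*$, and expand
$\fracn Z^T(y-Z\t^*)+\widehat D\t^*=\fracn X^T\xi+\fracn W^T\xi-\fracn X^TW\t^*-\fracn(W^TW)\t^*+\widehat D\t^*$. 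The term $\fracn(W^TW)\t^*$ is split into its off-diagonal part $\fracn(W^TW-\mathrm{Diag}\{W^TW\})\t^*$ and its diagonal part $\fracn\mathrm{Diag}\{W^TW\}\t^*$; the latter is then matched against $\widehat D\t^*$ by writing $\widehat D\t^*-\fracn\mathrm{Diag}\{W^TW\}\t^* = (\widehat D - D)\t^* + (D-\fracn\mathrm{Diag}\{W^TW\})\t^*$.

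Next I would bound each of the resulting six pieces in $\ell_\infty$ using the two preliminary lemmas and Assumption (A5). Specifically: $\big|\fracn X^T\xi\big|_\infty\le\delta_2$ and $\big|\fracn W^T\xi\big|_\infty\le\delta_3$ by Lemma~\ref{lem2}; $\big|\fracn X^TW\t^*\big|_\infty\le\delta_1'|\t^*|_2$ and $\big|\fracn(W^TW-\mathrm{Diag}\{W^TW\})\t^*\big|_\infty\le\delta_4'|\t^*|_2$ by Lemma~\ref{lem3a}; $\big|(D-\fracn\mathrm{Diag}\{W^TW\})\t^*\big|_\infty\le\delta_5|\t^*|_\infty$ by Lemma~\ref{lem2} (using $|Av|_\infty\le|A|_\infty|v|_\infty$ for a diagonal matrix $A$); and $\big|(\widehat D-D)\t^*\big|_\infty\le b(\e)|\t^*|_\infty$ by Assumption (A5). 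Collecting terms and using $\mu=\delta_1'+\delta_4'$, $\tau=\delta_2+\delta_3$, $\b=b(\e)+\delta_5$, together with $|\t^*|_2=t$ and $|\t^*|_\infty=u$ for our candidate point, gives exactly
$\big|\fracn Z^T(y-Z\t^*)+\widehat D\t^*\big|_\infty\le (\delta_1'+\delta_4')|\t^*|_2 + (b(\e)+\delta_5)|\t^*|_\infty + (\delta_2+\delta_3) = \mu t + \b u + \tau$,
which is the nontrivial constraint. The remaining constraints are immediate: $|\t^*|_2\le t$ and $|\t^*|_\infty\le u$ hold with equality, $t,u\ge0$, and $\t^*\in\Th$ is part of the hypothesis on $\t^*$.

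Finally I would account for the probability. Lemma~\ref{lem2} supplies five inequalities, each on an event of probability at least $1-\e$, of which we use four ($\delta_2,\delta_3,\delta_5$, and $\delta_1$ is not needed here but $\delta_4$ likewise is not); Lemma~\ref{lem3a} supplies two more, each of probability at least $1-\e$; and Assumption (A5) contributes one event of probability at least $1-\e$. A union bound over the (at most six) events actually invoked yields the claimed probability at least $1-6\varepsilon$. I do not expect any genuine obstacle here: the lemma is a bookkeeping step that verifies feasibility of the oracle point, and the only mild care needed is in correctly pairing the diagonal term of $W^TW$ with $\widehat D$ via the triangle inequality and in using the diagonal-matrix bound $|\mathrm{Diag}\{A\}v|_\infty\le|\mathrm{Diag}\{A\}|_\infty|v|_\infty$ rather than an $\ell_2$ bound, so that the coefficient of $u$ ends up being $\b$ and not something larger.
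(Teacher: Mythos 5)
Your proposal is correct and follows essentially the same route as the paper: the identical six-term decomposition of $\fracn Z^T(y-Z\t^*)+\widehat D\t^*$ (splitting $W^TW$ into diagonal and off-diagonal parts and pairing the diagonal with $\widehat D$ through $D$), the same bounds from Lemmas \ref{lem2} and \ref{lem3a} and Assumption (A5), and the same union bound over six events. The only blemish is a harmless miscount in your last paragraph (you use three inequalities from Lemma \ref{lem2}, not four; the fourth event in that group comes from (A5)), which does not affect the final probability $1-6\varepsilon$.
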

\begin{proof} First, note that
$Z^T(y-Z\theta^*)+n{\widehat D}\theta^*$ is equal to
\begin{align*}
&-X^TW\theta^*+X^T\xi+W^T\xi-(W^TW-\text{Diag}\{W^TW\})\theta^*\\
&-(\text{Diag}\{W^TW\}-nD)\theta^*+n(\widehat D-D)\theta^*.
\end{align*}
By definition of $\delta_i$ and $b$, with probability at least
$1-4\varepsilon$, we have
\begin{align}
&|\fracn X^T\xi|_{\infty}+|\fracn W^T\xi|_{\infty}\leq \delta_2+\delta_3\label{lem3.2}\\
&|(\fracn \text{Diag}\{W^TW\}-D)\theta^*|_{\infty}\leq|\fracn \text{Diag}\{W^TW\}-D|_{\infty}|\theta^*|_\infty\leq \delta_5|\theta^*|_\infty\label{lem3.3}\\
&|(\widehat D-D)\theta^*|_{\infty}\leq b(\e)|\theta^*|_\infty,\label{lem3.4}
\end{align}
where in (\ref{lem3.3}) and (\ref{lem3.4}) we have used that the considered matrices are diagonal. Also, by Lemma \ref{lem3a}, with probability at least
$1-2\varepsilon$, we have
\begin{align}
&|\fracn X^TW\theta^*|_{\infty}\leq \delta_1'|\theta^*|_2\label{ineg}\\
&|\fracn(W^TW-\text{Diag}\{W^TW\})\theta^*|_{\infty}\leq\delta_4'|\theta^*|_2.\label{ineg1}
\end{align}
Combining the decomposition of $Z^T(y-Z\theta^*)+n{\widehat D}\theta^*$ together with (\ref{lem3.2})-(\ref{ineg1}), we find that $$\big|\fracn Z^T(y-Z\theta^*)+\widehat{D}\theta^*\big|_\infty\leq\mu |\theta^*|_2
+b|\t^*|_\infty+\tau,$$ with probability at
least $1-6\varepsilon$, which implies the lemma.
\end{proof}
$~$\\

\begin{lemma}\label{lem4} Let $\hat \theta$ be the $\{\ell_1,\ell_2,\ell_\infty\}$-compensated MU-selector. Assume (A1)-(A3) and (A5). Then with probability at least $1-6\varepsilon$ (on the same event as in Lemma \ref{lem3}), we have
\begin{align}
& |(\hat\theta-\theta^*)_{J^c}|_1\leq (1+\lambda+\nu)|(\hat \theta-\theta^*)_{J}|_1, \label{c1}\\
&  \hat t - |\t^*|_2 \le \{(1+\nu)/\lambda\} |\hat \theta-\t^*|_1 \ \ \mbox{and} \ \ \hat u -|\t^*|_\infty \le \{(1+\lambda)/\nu\}|\hat \theta-\t^*|_1.\label{c2}
\end{align}
\end{lemma}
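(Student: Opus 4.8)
The plan is to extract both statements from the single optimality inequality that $(\hat\theta,\hat t,\hat u)$ must satisfy. Since $(\hat\theta,\hat t,\hat u)$ solves \eqref{conicNew} while, by Lemma~\ref{lem3}, the triple $(\theta^*,|\theta^*|_2,|\theta^*|_\infty)$ lies in the feasible set on the event of that lemma, we get
$$|\hat\theta|_1+\lambda\hat t+\nu\hat u\le|\theta^*|_1+\lambda|\theta^*|_2+\nu|\theta^*|_\infty .$$
Using the feasibility constraints $|\hat\theta|_2\le\hat t$ and $|\hat\theta|_\infty\le\hat u$, this gives $N(\hat\theta)\le N(\theta^*)$, where I write $N(\theta)=|\theta|_1+\lambda|\theta|_2+\nu|\theta|_\infty$ (a norm on $\R^p$ since $\lambda,\nu>0$). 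Everything below is deterministic on the event of Lemma~\ref{lem3}.

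For \eqref{c1}, I would decompose $\hat\theta=(\theta^*_J+\Delta_{J^c})+\Delta_J$, which holds because $\theta^*_{J^c}=0$ (so $\theta^*=\theta^*_J$) and $\Delta=\hat\theta-\theta^*$. The triangle inequality for $N$ then yields $N(\hat\theta)\ge N(\theta^*_J+\Delta_{J^c})-N(\Delta_J)$. The key step is the lower bound $N(\theta^*_J+\Delta_{J^c})\ge N(\theta^*)+|\Delta_{J^c}|_1$: since $\theta^*_J$ and $\Delta_{J^c}$ have disjoint supports, $|\theta^*_J+\Delta_{J^c}|_1=|\theta^*_J|_1+|\Delta_{J^c}|_1$, while $|\theta^*_J+\Delta_{J^c}|_2\ge|\theta^*_J|_2$ and $|\theta^*_J+\Delta_{J^c}|_\infty\ge|\theta^*_J|_\infty$, so adding up with weights $1,\lambda,\nu$ keeps the full $N(\theta^*_J)=N(\theta^*)$ plus the increment $|\Delta_{J^c}|_1$. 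On the other hand $N(\Delta_J)\le(1+\lambda+\nu)|\Delta_J|_1$ because $|\cdot|_2\le|\cdot|_1$ and $|\cdot|_\infty\le|\cdot|_1$. Plugging these into $N(\hat\theta)\le N(\theta^*)$ and cancelling $N(\theta^*)$ gives $|\Delta_{J^c}|_1\le(1+\lambda+\nu)|\Delta_J|_1$.

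For \eqref{c2}, I would return to the displayed optimality inequality and isolate $\lambda\hat t$, obtaining $\lambda\hat t\le\bigl(|\theta^*|_1-|\hat\theta|_1\bigr)+\lambda|\theta^*|_2+\nu\bigl(|\theta^*|_\infty-\hat u\bigr)$. Bounding $|\theta^*|_1-|\hat\theta|_1\le|\Delta|_1$ by the reverse triangle inequality, and, using $\hat u\ge|\hat\theta|_\infty$, $|\theta^*|_\infty-\hat u\le|\theta^*|_\infty-|\hat\theta|_\infty\le|\Delta|_\infty\le|\Delta|_1$, I get $\lambda(\hat t-|\theta^*|_2)\le(1+\nu)|\Delta|_1$, which is the first inequality in \eqref{c2}. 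The second is obtained by the symmetric computation: isolate $\nu\hat u$, bound $|\theta^*|_1-|\hat\theta|_1\le|\Delta|_1$ and, using $\hat t\ge|\hat\theta|_2$, $|\theta^*|_2-\hat t\le|\theta^*|_2-|\hat\theta|_2\le|\Delta|_2\le|\Delta|_1$, to conclude $\nu(\hat u-|\theta^*|_\infty)\le(1+\lambda)|\Delta|_1$.

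I expect the only non-routine ingredient to be the partial-decomposability estimate $N(\theta^*_J+\Delta_{J^c})\ge N(\theta^*)+|\Delta_{J^c}|_1$. The functionals $|\cdot|_2$ and $|\cdot|_\infty$ are not additive over the partition $(J,J^c)$, so one cannot split off a full $N(\Delta_{J^c})$ term; the observation that makes the argument work is that it suffices to retain $|\Delta_{J^c}|_1$ on the $J^c$-side (together with the entire $N(\theta^*)$ on the $J$-side), and the slack thereby created on the $\Delta_J$-side is precisely what is absorbed into the cone constant $1+\lambda+\nu$, so that no restriction on $\lambda,\nu$ is needed.
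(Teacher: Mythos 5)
Your proof is correct and follows essentially the same route as the paper: both arguments start from the optimality inequality $|\hat\theta|_1+\lambda\hat t+\nu\hat u\le|\theta^*|_1+\lambda|\theta^*|_2+\nu|\theta^*|_\infty$ (valid on the event of Lemma \ref{lem3}), extract the cone condition \eqref{c1} by decomposing over $J$ and $J^c$, and obtain \eqref{c2} by isolating $\lambda\hat t$ (resp.\ $\nu\hat u$) exactly as in the paper. Your treatment of \eqref{c1} is in fact more careful than the paper's: the paper asserts the intermediate bound $|\Delta_{J^c}|_1+\lambda|\Delta_{J^c}|_2+\nu|\Delta_{J^c}|_\infty\le|\Delta_J|_1+\lambda|\Delta_J|_2+\nu|\Delta_J|_\infty$ without detail, which does not follow from simple decomposability since $|\cdot|_2$ and $|\cdot|_\infty$ are not additive over disjoint supports, whereas your observation that it suffices to retain only $|\Delta_{J^c}|_1$ on the $J^c$ side (using $|\theta^*_J+\Delta_{J^c}|_2\ge|\theta^*_J|_2$ and $|\theta^*_J+\Delta_{J^c}|_\infty\ge|\theta^*_J|_\infty$) yields the stated conclusion rigorously.
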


\begin{proof} Set $\Delta=\hat\theta-\theta^*$. On the event of Lemma \ref{lem3}, $(\t^*, |\t^*|_2,|\t^*|_\infty)$ belongs to the feasible set of the minimization problem (\ref{conic}). Consequently,
\begin{align}
& |\hat\theta|_1+ \lambda|\hat\theta|_2+\nu|\hat\theta|_\infty\leq |\hat\theta|_1 + \lambda \hat t +\nu \hat u \le |\t^*|_1 + \lambda |\t^*|_2+\nu|\t^*|_\infty. \label{c3}
\end{align}
This implies
$$
|\Delta_{J^c}|_1+\lambda |\Delta_{J^c}|_2+\nu|\Delta_{J^c}|_\infty \le |\Delta_J|_1 + \lambda |\Delta_J|_2+\nu|\Delta_{J}|_\infty\le (1+\lambda+\nu)|\Delta_J|_1,
$$
and so
$$|\Delta_{J^c}|_1 \le (1+ \lambda+\nu) |\Delta_J|_1.$$
and (\ref{c1}) follows. To prove (\ref{c2}), it suffices to note that (\ref{c3}) implies
\begin{align*}
& \lambda \hat t \le |\theta^*|_1 -|\hat\theta|_1  + \lambda |\theta^*|_2 +\nu|\t^*|_\infty - \nu\hat u \le |\hat \theta-\theta^*|_1 + \lambda |\theta^*|_2 + \nu|\hat\theta-\t^*|_\infty
\end{align*}
and the result follows since $|\hat\theta|_\infty \le \hat u$ and $|\hat\theta-\t^*|_\infty\le |\hat\theta-\t^*|_1$. Similar calculations yield the bound for $\hat u$.
\end{proof}

\begin{lemma}\label{lem5} Let $\hat \theta$ be the $\{\ell_1,\ell_2,\ell_\infty\}$-compensated MU-selector. Assume (A1)-(A3) and (A5). Then,
on a subset of the event of Lemma \ref{lem3} having probability at least $1-8\varepsilon$, we have
\begin{align*}
& \big|\fracn X^TX(\hat\theta-\theta^*)\big|_\infty \le  \mu_0 + \mu_1 |\hat\theta-\theta^*|_1 + \mu_2 |\theta^*|_2 + \mu_\infty|\theta^*|_\infty,
\end{align*}
where $\mu_0 = \tau+\delta_2+\delta_3$, $\mu_1 = 2\delta_1+\delta_4+\delta_5+b(\e)+\{(1+\nu)/\lambda\}\mu + \{(1+\lambda)/\nu\}\b$, $\mu_2 = \mu+\delta_1'$, $\mu_\infty = \b+b(\e)+\delta_4'+\delta_5$.
\end{lemma}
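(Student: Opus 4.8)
The plan is to bound $\big|\fracn X^TX\Delta\big|_\infty$ by inserting the observable quantity $\fracn Z^T(y-Z\theta) + \widehat D\theta$ and peeling off the error terms one at a time, exactly in the spirit of Step 2 in the proof of Theorem~\ref{th:MUnew}. First I would write the algebraic identity
\[
\fracn X^TX\Delta = \fracn Z^TZ\Delta - \fracn Z^TW\Delta - \fracn W^TX\Delta,
\]
and then further split $\fracn Z^TW\Delta = \fracn X^TW\Delta + \fracn(W^TW-\text{Diag}\{W^TW\})\Delta + \fracn\text{Diag}\{W^TW\}\Delta$. Since we want $\widehat D$ to appear, I would also write $\fracn\text{Diag}\{W^TW\}\Delta = D\Delta + (\fracn\text{Diag}\{W^TW\}-D)\Delta$ and introduce $\widehat D\Delta$ via $D\Delta = \widehat D\Delta - (\widehat D - D)\Delta$, so that $\fracn Z^TZ\Delta - \widehat D\Delta$ is what enters the feasibility constraint of (\ref{conicNew}), once we also split $\fracn Z^TZ\Delta = \fracn Z^T(y - Z\theta^*) - \fracn Z^T(y - Z\hat\theta)$.

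Next I would bound each piece using the lemmas: the purely stochastic cross terms $\fracn W^TX\Delta$ and $\fracn X^TW\Delta$ by $\delta_1|\Delta|_1$ each; the off-diagonal quadratic term $\fracn(W^TW-\text{Diag}\{W^TW\})\Delta$ by $\delta_4|\Delta|_1$ (coordinatewise, via Lemma~\ref{lem2}); the diagonal terms $|(\fracn\text{Diag}\{W^TW\}-D)\Delta|_\infty \le \delta_5|\Delta|_1$ and $|(\widehat D - D)\Delta|_\infty \le b(\e)|\Delta|_1$ (diagonal matrices, so the sup-norm of the product is at most the sup-norm of the matrix times $|\Delta|_\infty \le |\Delta|_1$); and $D\Delta$ has already been absorbed. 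For the two feasibility terms, $|\fracn Z^T(y - Z\theta^*) + \widehat D\theta^*|_\infty \le \mu|\t^*|_2 + \b|\t^*|_\infty + \tau$ comes straight from Lemma~\ref{lem3}, while $|\fracn Z^T(y - Z\hat\theta) + \widehat D\hat\theta|_\infty \le \mu\hat t + \b\hat u + \tau$ is the constraint satisfied by the minimizer; then I would substitute the bounds (\ref{c2}) of Lemma~\ref{lem4}, i.e. $\hat t \le |\t^*|_2 + \{(1+\nu)/\lambda\}|\Delta|_1$ and $\hat u \le |\t^*|_\infty + \{(1+\lambda)/\nu\}|\Delta|_1$, to get $\le \mu|\t^*|_2 + \b|\t^*|_\infty + \tau + (\{(1+\nu)/\lambda\}\mu + \{(1+\lambda)/\nu\}\b)|\Delta|_1$.

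Finally I would collect the coefficients. The terms not multiplying $|\Delta|_1$ give $\mu_0 = \tau + \delta_2 + \delta_3$ coming from the two $\tau$'s and noting $\tau = \delta_2+\delta_3$ so one of them can be re-expanded (or simply $\mu_0 = 2\tau$, rewritten); the $|\t^*|_2$ coefficient is $\mu + \delta_1'$ (one $\mu$ from each feasibility bound is $2\mu$, but one $\delta_1'$ from $\fracn X^TW\theta^*$... — here I should be careful: actually $\fracn X^TW\Delta$ already contributes $\delta_1|\Delta|_1$, so the $|\t^*|_2$ terms are $\mu$ from $|\fracn Z^T(y-Z\hat\theta)+\widehat D\hat\theta|_\infty$ plus $\mu$ from the $\theta^*$ feasibility bound, i.e. $2\mu$; to match the stated $\mu_2 = \mu + \delta_1'$ one only keeps one $\mu$ and the $\delta_1'$ must come from splitting differently — so I would instead route $\fracn X^TW\theta^*$ through Lemma~\ref{lem3a} giving $\delta_1'|\t^*|_2$); the $|\t^*|_\infty$ coefficient $\mu_\infty = \b + b(\e) + \delta_4' + \delta_5$ comes from the $\b$ in the feasibility bound, the $b(\e)$ from $(\widehat D - D)\t^*$, the $\delta_4'$ from the off-diagonal quadratic acting on $\t^*$ (Lemma~\ref{lem3a}), and the $\delta_5$ from $(\fracn\text{Diag}\{W^TW\}-D)\t^*$; and $\mu_1 = 2\delta_1 + \delta_4 + \delta_5 + b(\e) + \{(1+\nu)/\lambda\}\mu + \{(1+\lambda)/\nu\}\b$ collects the two cross terms, the off-diagonal and diagonal quadratic terms acting on $\Delta$, the $\widehat D - D$ term acting on $\Delta$, and the two contributions from $\hat t, \hat u$. \textbf{The main obstacle} is purely bookkeeping: deciding which occurrences of $\fracn X^TW(\cdot)$ and $\fracn(W^TW-\text{Diag})(\cdot)$ to bound via Lemma~\ref{lem2} (getting a $|\Delta|_1$ or $|\t^*|_1$ factor) versus via Lemma~\ref{lem3a} (getting a $|\t^*|_2$ factor), since the sharper $|\t^*|_2$-type bounds are only available when the operand is the fixed vector $\t^*$, not the random $\Delta$ — so the $\t^*$-pieces must be isolated before applying the lemmas, and the probability $1-8\e$ is the union bound over the six events of Lemma~\ref{lem2}/(A5) plus the two events of Lemma~\ref{lem3a}.
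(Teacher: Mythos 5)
Your decomposition is algebraically valid and every individual bound is justified by the cited lemmas, so the argument goes through; but it is organized differently from the paper's proof, and the difference shows up in the constants. The paper does \emph{not} invoke the feasibility of $(\t^*,|\t^*|_2,|\t^*|_\infty)$ a second time inside this lemma: it writes
$|\fracn X^TX\Delta|_\infty \le |\fracn Z^T(Z\hat\theta-y)-\widehat D\hat\theta|_\infty+|(\fracn Z^TW-D)\hat\theta|_\infty+|(\widehat D-D)\hat\theta|_\infty+|\fracn Z^T\xi|_\infty+|\fracn W^TX\Delta|_\infty$,
uses the constraint only at $\hat\theta$ (then \eqref{c2}), and splits $\hat\theta=\t^*+\Delta$ inside the remaining noise terms, bounding the $\t^*$-pieces via Lemma~\ref{lem3a} (inequality \eqref{c9}) and the $\Delta$-pieces via Lemma~\ref{lem2}. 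You instead subtract the constraint evaluated at $\t^*$ from the constraint evaluated at $\hat\theta$, so only $\Delta$-pieces remain to be bounded coordinatewise. Your route is arguably cleaner, but the double use of the feasibility bound yields $\mu_2=2\mu=\mu+\delta_1'+\delta_4'$ and $\mu_\infty=2\b=\b+b(\e)+\delta_5$, i.e.\ an extra $\delta_4'$ attached to $|\t^*|_2$ rather than to $|\t^*|_\infty$ as in the stated lemma. You flag this yourself, but your proposed repair (re-routing only $\fracn X^TW\t^*$ through Lemma~\ref{lem3a}) does not by itself recover $\mu_2=\mu+\delta_1'$, because $\fracn(W^TW-\mathrm{Diag}\{W^TW\})\t^*$ still contributes $\delta_4'|\t^*|_2$ under Lemma~\ref{lem3a}; indeed the paper's own display \eqref{c9} attaches that $\delta_4'$ to $|\t^*|_\infty$, which is stronger than what Lemma~\ref{lem3a} literally gives. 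Since $|\t^*|_\infty\le|\t^*|_2$ and $\delta_4'$ is of order $\sqrt{(\log p)/n}$, the discrepancy is immaterial: your version of the lemma is of the same order in every coefficient and supports Theorems~\ref{th:conic} and \ref{th:conicRelaxed} unchanged. Your probability accounting ($1-8\e$ via the union over the five events of Lemma~\ref{lem2}, the two of Lemma~\ref{lem3a}, and (A5)) matches the paper's.
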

\noindent Note that $\mu_0$ and $\mu_2$ are of order $\sqrt{\fracn\log (c'p/\e)}$, and $\mu_1$ and $\mu_\infty$ are of order $\sqrt{\fracn\log (c'p/\e)}+b(\e)$.
%
\begin{proof}
Throughout the proof, we assume that we are on the event of probability
at least $1-6\varepsilon$ where inequalities (\ref{lem3.2}) -- (\ref{ineg1}) hold and $(\theta^*, |\theta^*|_2,|\t^*|_\infty)$ belongs to the feasible set of the minimization problem (\ref{conicNew}).  We
have \begin{align*} |\fracn X^TX\Delta|_{\infty} \le &|\fracn Z^T(Z\hat{\theta}-y)-{\widehat D}\hat{\theta}|_{\infty}+|(\fracn Z^TW-D)\hat{\theta}|_{\infty}\\+&|(\widehat
D-D)\hat{\theta}|_{\infty}+|\fracn Z^T\xi|_{\infty}+|\fracn W^TX\Delta|_{\infty}.
\end{align*}
Using the fact that $(\hat\theta, \hat t, \hat u)$ belongs to the feasible set of the minimization problem (\ref{conic}) together with (\ref{c2}), we obtain
\begin{align*}
 |\fracn Z^T(Z\hat{\theta}-y)-{\widehat D}\hat{\theta}|_{\infty} & \le \mu  \hat t + \b\hat u +\tau \\
 & \le \{(1+\nu)/\lambda\}\mu |\Delta|_1 + \mu |\theta^*|_2 + \{(1+\lambda)/\nu\}\b |\Delta|_1 + \b |\theta^*|_\infty +\tau.
\end{align*}
Using that $\hat\theta=\theta^*+\Delta$, Assumption (A5) together with (\ref{lem3.4}) yields that
\begin{align*}
|\fracn X^TX\Delta|_{\infty} \le & \{(1+\nu)/\lambda\}\mu |\Delta|_1 + \mu |\theta^*|_2 + \{(1+\lambda)/\nu\}\b |\Delta|_1 + \b |\theta^*|_\infty +\tau \\
& +|(\fracn Z^TW-D)\hat{\theta}|_{\infty}+|(\widehat
D-D)\hat{\theta}|_{\infty}+|\fracn Z^T\xi|_{\infty}+|\fracn W^TX\Delta|_{\infty}\\
\le & \{(1+\nu)/\lambda\}\mu |\Delta|_1 + \mu |\theta^*|_2 + \{(1+\lambda)/\nu\}\b |\Delta|_1 + \b |\theta^*|_\infty +\tau \\
& +|(\fracn Z^TW-D)\hat{\theta}|_{\infty}+b(\e)|\t^*|_\infty+b(\e)|\Delta|_1+\delta_2+\delta_3+|\fracn W^TX\Delta|_{\infty}.\\
\end{align*}
Now remark that $|(\fracn Z^TW-D)\hat{\theta}|_{\infty}\le |(\fracn Z^TW-D)\Delta|_{\infty} + |(\fracn Z^TW-D)\t^*|_{\infty}$. In view of Lemma \ref{lem3a} and (\ref{lem3.3}), on the initial event of probability at least $1-6\e$,
\begin{align}
&|(\fracn Z^TW-D)\theta^* |_{\infty}\nonumber\\
\leq&
|\fracn(W^TW-\text{Diag}\{W^TW\}) \theta^*|_{\infty}
+|(\fracn\text{Diag}\{W^TW\}-D)\theta^*|_{\infty} + \ |\fracn X^TW \theta^* |_{\infty}\nonumber\\
\leq& (\delta_4'+\delta_5)|\t^*|_\infty+\delta_1'|\t^*|_2. \label{c9}
\end{align}
Moreover, we have
\begin{multline*}|(\fracn Z^TW-D)\Delta |_{\infty}\leq
|\fracn Z^TW-D|_{\infty}|\Delta|_1\\
\le
\big(|\fracn(W^TW-\text{Diag}\{W^TW\})|_{\infty}
+|\fracn\text{Diag}\{W^TW\}-D|_{\infty}
+|\fracn X^TW|_{\infty}\big)|\Delta|_1.
\end{multline*}
Therefore,
\begin{equation} \label{c7}
|(\fracn Z^TW-D)\Delta |_{\infty}
\le (\delta_1+\delta_4+\delta_5)|\Delta|_1,
\end{equation}
with probability at least $1-8\e$ (since we intersect the initial event of probability at least $1-6\e$ with the event of probability at least $1-2\e$ where the bounds $\delta_1$ and $\delta_4$ hold for the corresponding terms).  Next, on the same event of probability at least $1-8\e$,
\begin{eqnarray}
 |\fracn W^TX\Delta|_{\infty}\leq |\fracn X^TW|_{\infty} |\Delta|_1 \leq \delta_1 |\Delta|_1.
\label{c8}
\end{eqnarray}
To complete the proof, it suffices to plug (\ref{c9}) -- (\ref{c8}) in the last inequality for $|\fracn X^TX\Delta |_{\infty}$ and to obtain
\begin{align*}
|\fracn X^TX\Delta|_{\infty} \le &  [2\delta_1+\delta_4+\delta_5+b(\e)+\{(1+\nu)/\lambda\}\mu + \{(1+\lambda)/\nu\}\b] |\Delta|_1 \\
& + \{\mu+\delta_1'\} |\theta^*|_2 +  \{\b+b(\e)+\delta_4'+\delta_5\} |\t^*|_\infty +\tau+\delta_2+\delta_3.
\end{align*}
\end{proof}

\section*{Acknowledgement}
The work of the third author is supported by GENES, and by the French National Research Agency (ANR) as part of Idex Grant ANR -11- IDEX-0003-02, Labex ECODEC (ANR - 11-LABEX-0047), and IPANEMA grant (ANR-13-BSH1-0004-02).


\begin{thebibliography}{99}



\bibitem{BRT2014}
{A. Belloni}, {M. Rosenbaum} and {A.~B. Tsybakov} (2014) Linear and conic programming estimators in high-dimensional errors-in-variables models. \texttt{arxiv:1408.0241}




\bibitem{CChen}
Y.~Chen and C.~Caramanis (2012) Orthogonal matching pursuit with noisy and missing data: Low and high-dimensional results. \texttt{arxiv:1206.0823}

\bibitem{CChen1}
Y.~Chen and C.~Caramanis (2013) Noisy and missing data regression: Distribution-oblivious support recovery. {\it Proc. of International Conference on Machine Learning (ICML)}.


\bibitem{GT}
E.~Gautier and A.~B.~Tsybakov (2011)  High-dimensional instrumental variables regression and confidence sets. \texttt{arxiv:1105.2454}

\bibitem{GT2}
E.~Gautier and A.~B.~Tsybakov (2013) Pivotal estimation in high-dimensional regression via linear programming. In: {\it Empirical Inference -- A Festschrift in Honor of Vladimir N. Vapnik}, B.Sch\"olkopf, Z. Luo, V. Vovk eds., 195 - 204. Springer, New York e.a.



\bibitem{LW}
P-L.~Loh and M.~J.~Wainwright (2012) High-dimensional regression with noisy and missing data: Provable guarantees with non-convexity. {\it Annals of Statistics},  \textbf{40}, 1637--1664.






\bibitem{RT1}
M.~Rosenbaum and A.B.~Tsybakov (2010) Sparse recovery under matrix uncertainty. {\it Annals of Statistics}, \textbf{38}, 2620--2651.

\bibitem{RT2}
M.~Rosenbaum and A.B.~Tsybakov (2013) Improved matrix uncertainty selector. In: {\it From Probability to Statistics and Back: High-Dimensional Models and Processes -- A Festschrift in Honor of Jon A. Wellner}, M.Banerjee et al. eds. {\it IMS Collections}, vol.9, 276--290, Institute of Mathematical Statistics.

\bibitem{SFT1} \O.~S{\o}rensen, A.~Frigessi and M.~Thoresen~(2012) Measurement error in Lasso: Impact and likelihood bias correction. \texttt{arxiv:1210.5378}

\bibitem{SFT2} \O.~S{\o}rensen, A.~Frigessi and M.~Thoresen~(2014) Covariate selection in high-dimensional generalized linear
models with measurement error. \texttt{arxiv:1407.1070}




\end{thebibliography}
\end{document}